 \def\Spnr{Sp(d,\R)}
\newcommand{\tfa}{time-frequency analysis}
\newcommand{\stft}{short-time Fourier transform}
\newcommand{\tf}{time-frequency}
\newcommand{\fif}{if and only if}
\newcommand{\tfs}{time-frequency shift}
\newcommand{\modsp}{modulation space}
\newcommand{\psdo}{pseudodifferential operator}
\newtheorem{theorem}{Theorem}[section]
\newtheorem{lemma}[theorem]{Lemma}
\newtheorem{corollary}[theorem]{Corollary}
\newtheorem{proposition}[theorem]{Proposition}
\newtheorem{definition}[theorem]{Definition}
\newtheorem{cor}[theorem]{Corollary}
\newtheorem{remark}[theorem]{Remark}
\newcommand{\beqa}{\begin{eqnarray*}}
\newcommand{\eeqa}{\end{eqnarray*}}
\DeclareMathOperator*{\essupp}{ess\,sup\,}
\newcommand{\field}[1]{\mathbb{#1}}
\newcommand{\bR}{\field{R}}        
\newcommand{\bC}{\field{C}}        
\def\la{\lambda}
\def\cS{\mathcal{S}}
\def\cH{\mathcal{H}}
\def\cM{\mathcal{M}}
\def\cA{\mathcal{A}}
\def\cC{\mathcal{C}}
\def\cX{\mathcal{X}}
\def\rd{\bR^d}
\def\rdd{{\bR^{2d}}}
\def\lrd{L^2(\rd)}
\def\mif{M^{\infty, 1}}
\def\mifs{M^{\infty, 1}_{1\otimes v_s}}
\def\intrd{\int_{\rd}}
\def\intrdd{\int_{\rdd}}
\def\R{\right)}
\def\<{\left<}
\def\>{\right>}
\def\inv{^{-1}}
\def\mv1{M_v^1}
\def\phas{(x,\o )}
\def\mn{(m,n)}
\def\mn'{(m',n')}
\def\Spnr{Sp(d,\R)}
\def\o{\eta}
\def\R{\mathbb{R}}
\def\Ren{\mathbb{R}^d}
\def\Renn{\mathbb{R}^{2d}}
\def\sch{\mathcal{S}}
\def\Fur{\mathcal{F}}
\def\f{\varphi}
\def\Sn2{S_{2}(L^{2}(\Ren))}
\def\S1{S_{1}(L^{2}(\Ren))}
\def\sig00{\sigma_{0,0}}
\def\la{\langle}
\def\ra{\rangle}
\newcommand{\A}{\mathcal{A}}
\begin{document}
\begin{abstract} It is well known that the  matrix of a  metaplectic
  operator with respect to phase-space shifts  is concentrated along the
  graph of a linear symplectic map. We show that the algebra generated
  by metaplectic operators and  by \psdo s in a Sj\"ostrand class
  enjoys the same decay properties. We study the behavior of these
  generalized metaplectic operators and represent them by Fourier
  integral operators. Our main result shows that the one-parameter
  group generated by a Hamiltonian  operator with a potential in the
  Sj\"ostrand class consists of generalized metaplectic operators. As
  a consequence, the Schr\"odinger equation preserves the phase-space
  concentration, as measured by \modsp\ norms.
\end{abstract}

\title[Generalized Metaplectic Operators]{Generalized Metaplectic Operators and the Schr\"odinger
  Equation with  a  Potential in the  Sj\"ostrand Class}

\author{Elena Cordero}
\address{Universit\`a di Torino, Dipartimento di Matematica, via Carlo Alberto 10, 10123 Torino, Italy}
\email{elena.cordero@unito.it}
\author{Karlheinz Gr\"ochenig}
\address{Faculty of Mathematics,
University of Vienna, Nordbergstrasse 15, A-1090 Vienna, Austria}
\email{karlheinz.groechenig@univie.ac.at}
\author{Fabio Nicola}
\address{Dipartimento di Scienze Matematiche,
Politecnico di Torino, corso Duca degli Abruzzi 24, 10129 Torino,
Italy}
\email{fabio.nicola@polito.it}
\author{Luigi Rodino}
\address{Universit\`a di Torino, Dipartimento di Matematica, via Carlo Alberto 10, 10123 Torino, Italy}
\email{luigi.rodino@unito.it}
\thanks{This work was completed with the support of the Erwin
  Schr\"odinger  Institute for Mathematical Physics,
  Vienna, Austria,  and the Center for Advanced Study in Oslo.  K.\ G.\ was
  supported in part by the  project P22746-N13  of the
Austrian Science Foundation (FWF)}
\subjclass{Primary 35S30; Secondary 47G30}

\subjclass[2010]{35S30,
47G30, 42C15}
\keywords{Fourier Integral
operators, modulation spaces, metaplectic operator, short-time Fourier
 transform,  Wiener algebra, Schr\"odinger equation}
\maketitle

\section{Introduction}

Metaplectic operators describe linear symplectic transformations of
phase space and  arise as intertwining  operators of the Schr\"odinger
representation of the Heisenberg group.   They solve  the 
free Schr\"odinger equation  and  the Schr\"odinger of the 
harmonic oscillator~\cite{folland89}. The generic metaplectic operator
can be represented by a Fourier integral operator with a quadratic
phase function and constant symbol. In this paper we introduce  a general
class of Fourier integral operators with quadratic phase function and
study the fundamental properties, such as boundedness, composition and
invertibility. Our main result shows that these generalized
metaplectic operators solve the Schr\"odinger equation with a
quadratic Hamiltonian and a bounded, but not necessarily smooth
perturbation. This is a far-reaching generalization of a result of Weinstein~\cite{Wein85}.

To fix notation, we write a point in phase space (in \tf\ space) as
$z=(x,\eta)\in\rdd$, and  the corresponding phase-space shift (\tfs )
acts on a function or distribution  as 
\begin{equation}
  \label{eq:kh25}
  \pi (z)f(t) = e^{2\pi i \eta t} f(t-x) \, .
\end{equation}
If $\A $ is a symplectic matrix on $\rdd $, i.e., $\A ^T J \A  = J$,
where
$$
J=\begin{pmatrix} 0&-I_d\\I_d&0\end{pmatrix}
\, 
$$
yields the standard symplectic form on $\rdd $, then the 
metaplectic operator $\mu (\cA )$ is  defined by the intertwining
relation
\begin{equation}\label{metap}
\pi (\cA z) = c_\cA  \, \mu (\cA ) \pi (z) \mu (\cA )\inv  \quad  \forall
z\in \rdd \, ,
\end{equation}
with a phase factor $c_\cA \in \bC , |c_{\cA } | =1$  (for details, see e.g. \cite{folland89,Gos11}).

Our point of departure is the decay of the kernel or matrix of the
metaplectic operator with respect to the set of \tfs s (phase-space
shifts) $\pi (z)$. If $\A \in \mathrm{Sp}(d,\R)$ and $g\in \cS (\rd
)$, then for $N\geq 0$ there exist a $C_N >0$ such that 
\begin{equation}
  \label{eq:kh21}
|\langle \mu (\A ) \pi (z)g, \pi (w) g\rangle |\leq C_N\langle w-\A z
\rangle ^{-N},\qquad w,z\in\R^{2d}. 
\end{equation}

Continuing our investigations in  \cite{Wiener}, we define
generalized metaplectic operators  by the decay of the
corresponding kernel as follows. To describe general decay conditions,
we use the polynomial weights $v_s(z) = (1+|z|)^s $ for $s\in \bR $
and the weighted $L^1$-space defined by the norm $\|f\|_{L^1_{v_s}} =
\|f v_s\|_1$. 

\begin{definition}\label{def1.1} Given $\cA \in \Spnr $,
  $g\in\cS(\rd)$,  and $s\geq0$, we say that  a
 linear operator $T:\cS(\rd)\to\cS'(\rd)$ is in the
class $FIO(\cA,v_s)$ if there exists a function $H\in L^1_{v_s}(\rdd)$
such that the kernel of $T$ with respect to \tfs s satisfies the decay
condition
\begin{equation}\label{asterisco}
|\langle T \pi(z) g,\pi(w)g\rangle|\leq H(w-\cA z),\qquad \forall w,z\in\rdd.
\end{equation}
\end{definition}
The union 
\[
FIO(Sp(d,\R),v_s)=\bigcup_{\mathcal{A}\in Sp(d,\R)} FIO(\mathcal{A},v_s)
\]
is then the  class of \emph{generalized metaplectic operators}. 

In the first part of the paper we study the fundamental properties of
generalized metaplectic operators. These concern the boundedness, the
composition of generalized metaplectic operators, properties of the
inverse operator, and alternative representations. Briefly, the main
theorem can be summarized as follows.  

\begin{theorem}[Main properties of generalized metaplectic operators] \label{tc1}

  (i) The definition of $FIO(\mathcal{A},v_s)$ is 
  independent of the window $g\in\cS(\rd)$. 

(ii) An operator $T\in  FIO(\mathcal{A} ,v_s)$ is bounded on every \modsp\
$M^p_{v_s} (\rd )$ for $1\leq p \leq \infty $ and $s\in \bR
$. (See~\eqref{defmod} for the precise definition.)

(iii)  If $T_i\in FIO(\mathcal{A}_i,v_s)$, $i=1,2$, then $T_1 T_2\in
FIO(\mathcal{A}_1\mathcal{A}_2,v_s)$. 

(iv)  If $T\in FIO(\mathcal{A},v_s)$ is invertible on $L^2(\rd)$,
then $T^{-1}\in FIO(\mathcal{A}^{-1},v_s)$. 
\end{theorem}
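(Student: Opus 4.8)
\emph{Sketch of the argument.} The plan is to obtain all four statements from a single device, the Moyal reproducing formula $f=\norm{g}_2^{-2}\int_{\rdd}\langle f,\pi(u)g\rangle\,\pi(u)g\ud u$ (valid weakly for $f\in\cS'(\rd)$), which — inserted in the appropriate place — rewrites the Gabor matrix $\langle S\pi(z)g,\pi(w)g\rangle$ of a change of window, of $Tf$, or of a composition $T_1T_2$, as an integral against Gabor matrices that are already under control. Two structural facts make everything close: for $s\ge0$ the weight $\vs(z)=(1+\abs z)^s$ is submultiplicative up to a constant, so $L^1_{\vs}(\rdd)$ is a convolution algebra and contains $\cS(\rdd)$; and any fixed linear isomorphism $\mathcal{B}$ of $\rdd$ distorts $\vs$ and any $L^p$-norm only boundedly, and preserves $L^p$-norms exactly when $\abs{\det\mathcal{B}}=1$, which holds for $\mathcal{B}\in\Spnr$. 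For \emph{(i)} I would take a second window $\gamma\in\cS(\rd)\setminus\{0\}$, insert the reproducing formula for $g$ twice into $\langle T\pi(z)\gamma,\pi(w)\gamma\rangle$, and bound the three factors that appear by $\abs{V_g\gamma}$, $H$, $\abs{V_g\gamma}$ (where $H$ is the majorant coming with the window $g$). The substitutions $u=z+p$, $v=\mathcal{A}z+q$ turn the triple integral into $(G\ast\abs{V_g\gamma})(w-\mathcal{A}z)$ with $G(q)=\int\abs{V_g\gamma(p)}\,H(q-\mathcal{A}p)\ud p$; since $V_g\gamma\in\cS$, the two facts above give $G\in L^1_{\vs}$, hence a new majorant $\widetilde H=\text{const}\cdot G\ast\abs{V_g\gamma}\in L^1_{\vs}$ valid for $\gamma$.

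Parts \emph{(ii)} and \emph{(iii)} run on the same convolution estimate. For \emph{(ii)}: applying $T$ to the reproducing formula for $f\in\cS(\rd)$ gives $\abs{V_g(Tf)(w)}\le C\int\abs{V_gf(z)}\,H(w-\mathcal{A}z)\ud z$; I would multiply by $\vs(w)$, use $\vs(w)\le C\,\vs(w-\mathcal{A}z)\,\vs(z)$ and the Jacobian-$1$ substitution $z=\mathcal{A}\inv\zeta$ to rewrite the right-hand side as an honest convolution $\big((\vs\abs{V_gf})\circ\mathcal{A}\inv\big)\ast(\vs H)$, and close with Young's inequality, getting $\norm{Tf}_{M^p_{\vs}}\le C\norm{H}_{L^1_{\vs}}\norm{f}_{M^p_{\vs}}$ for every $p\in[1,\infty]$; then extend $T$ to all of $M^p_{\vs}$ by density when $p<\infty$ and by duality when $p=\infty$ (having first checked that the formal adjoint of an $FIO(\mathcal{A},\vs)$ operator lies in $FIO(\mathcal{A}\inv,\vs)$, and using $M^\infty_{\vs}=(M^1_{v_{-s}})'$). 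The same computation with $v_r$, $\abs r\le s$, in place of $\vs$ handles all admissible weights. For \emph{(iii)}: by (ii), $T_2\pi(z)g\in M^1_{\vs}$ and $T_1$ is continuous on $M^1_{\vs}$, so inserting the reproducing formula between $T_1$ and $T_2$ is legitimate and yields $\abs{\langle T_1T_2\pi(z)g,\pi(w)g\rangle}\le C\int H_2(u-\mathcal{A}_2z)\,H_1(w-\mathcal{A}_1u)\ud u$; the substitution $u=\mathcal{A}_2z+p$ rewrites this as $H(w-\mathcal{A}_1\mathcal{A}_2z)$ with $H(r)=C\int H_2(p)\,H_1(r-\mathcal{A}_1p)\ud p$, and the moderateness bookkeeping gives $\norm{H}_{L^1_{\vs}}\le C\norm{H_1}_{L^1_{\vs}}\norm{H_2}_{L^1_{\vs}}<\infty$, so $T_1T_2\in FIO(\mathcal{A}_1\mathcal{A}_2,\vs)$.

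Part \emph{(iv)} is where the genuine work lies. I would first peel off the metaplectic part: write $T=\mu(\mathcal{A})P$ with $P:=\mu(\mathcal{A})\inv T$. By \eqref{eq:kh21} (applied to $\mathcal{A}\inv$, and using that $\mu(\mathcal{A})\inv$ equals $\mu(\mathcal{A}\inv)$ up to a unimodular constant) the operator $\mu(\mathcal{A})\inv$ lies in $FIO(\mathcal{A}\inv,\vs)$, in fact with Schwartz decay along the graph of $\mathcal{A}\inv$; so (iii) gives $P\in FIO(\mathcal{A}\inv\mathcal{A},\vs)=FIO(\Id,\vs)$, and $P$ is invertible on $\lrd$ because $T$ and $\mu(\mathcal{A})$ are. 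Parts (ii)--(iii) already display $FIO(\Id,\vs)$, normed by $\norm{T}=\inf\norm{H}_{L^1_{\vs}}$, as a Banach algebra continuously embedded in $\cB(\lrd)$; what remains — and what I expect to be the main obstacle — is the assertion that this algebra is \emph{inverse-closed} in $\cB(\lrd)$, that is, a Wiener-type lemma. This is exactly the result proved for the "diagonal" class in \cite{Wiener} (equivalently, the inverse-closedness of the weighted Sj\"ostrand class $\mifs$, once $FIO(\Id,\vs)$ is identified with the corresponding class of Weyl pseudodifferential operators — an identification worth recording in its own right). Granting it, $P\inv\in FIO(\Id,\vs)$, whence $T\inv=P\inv\mu(\mathcal{A})\inv\in FIO(\Id\cdot\mathcal{A}\inv,\vs)=FIO(\mathcal{A}\inv,\vs)$ by (iii).

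To sum up the plan: (i)--(iii) are soft manipulations of the Gabor matrix, anchored on the convolution-algebra property of $L^1_{\vs}$ and on the fact that symplectic maps distort neither $L^p$-norms nor (up to constants) the weight $\vs$; (iv) is reduced, by factoring out the truly metaplectic part, to the Wiener lemma for the diagonal class $FIO(\Id,\vs)$, which is the one nontrivial analytic ingredient and where I would expect the real difficulty to sit.
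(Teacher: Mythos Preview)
Your proposal is correct and follows essentially the same route as the paper: each of (i)--(iii) is obtained by inserting the reproducing formula and reducing to a convolution estimate in $L^1_{v_s}(\rdd)$, and (iv) is reduced to the Wiener lemma for $FIO(\mathrm{Id},v_s)$ (identified via Proposition~\ref{charpsdo} with the weighted Sj\"ostrand class). The only cosmetic difference is in the factorization for (iv): you peel off the metaplectic part and work with $P=\mu(\mathcal{A})^{-1}T$, whereas the paper instead forms $P=T^{*}T$ (after first checking $T^{*}\in FIO(\mathcal{A}^{-1},v_s)$); both devices land in $FIO(\mathrm{Id},v_s)$, invoke the same inverse-closedness result, and then recover $T^{-1}$ by the algebra property.
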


In short,  the class of the generalized metaplectic operators
$FIO(Sp(d,\R),v_s)$, $s\geq0$, is a Wiener sub-algebra of
$B(L^2(\rd))$. 

Definition~\ref{def1.1} is convenient, but it does not provide a
concrete  formula for  generalized metaplectic
operators. For a more analytic description we use Fourier integral
operators and \psdo s in the Weyl calculus  defined by 
\[
Tf(x)=\sigma^w(x,D)f=\int_{\rdd} e^{2\pi i(x-y)\eta}
\sigma\Big(\frac{x+y}{2},\eta\Big)f(y)\,dy\,d\eta \, .
\]
Hence, for the sake of simplicity we omit the Planck constant. In view of the main results of the paper, this is not a restricted setting.\par
The appropriate symbol class is the \modsp\ $\mifs (\rdd)$ defined by the
norm  consisting of all symbols on $\rdd $ such that 
\begin{equation}\label{otto}
\|\sigma \|_{\mifs } = \int_{\rdd}\sup_{z\in\rdd}|\langle \sigma,\pi(z,\zeta)g\rangle |v_s(\zeta)\, d\zeta<\infty.
\end{equation}

Then we have the following characterization of $FIO(\A , v_s)$.

\begin{theorem}[Representations of generalized metaplectic operators] \label{tc2}
(i) An operator $T$ is in $FIO(\A , v_s)$  \fif\ 
 there exist  symbols  $\sigma_1$ and $\sigma _2 \in M^{\infty,1}_{1\otimes v_s}(\rdd)$ such that
 \begin{equation}
   \label{eq:kh23}
T=\sigma_1^w(x,D)\mu(\A) =\mu(\A)\sigma_2^w(x,D).   
 \end{equation}

(ii) Let $
\A=\Big(\begin{smallmatrix} A&B\\C&D\end{smallmatrix}\Big)\in Sp(d,\R) $ be a symplectic
matrix with $ \det A\not=0 $ and define
the phase function $\Phi $ as 
\begin{equation}\label{fase}\Phi(x,\eta)=\frac12  x CA^{-1}x+
\eta  A^{-1} x-\frac12\eta  A^{-1}B\eta.
\end{equation}
Then $T \in FIO(\A , v_s)$ \fif\ $T$ can be written as a  type I
Fourier integral operator (FIO) in the form 
\begin{equation}
  \label{eq:kh22}
Tf(x)=\int_{\rd} e^{2\pi i\Phi(x,\eta)}\sigma(x,\eta)\hat{f}(\eta)\, d\eta,  
\end{equation}
\end{theorem}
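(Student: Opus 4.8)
The plan is to derive both parts from Theorem~\ref{tc1} together with one genuinely analytic input: the identification of the class $FIO(I,v_s)$ ($I$ the identity matrix) with the Sj\"ostrand-type symbol class. For part~(i) I would first record that $\mu(\cA)\in FIO(\cA,v_s)$, since the Schwartz decay \eqref{eq:kh21} is precisely \eqref{asterisco} with $H(\zeta)=C_N\langle\zeta\rangle^{-N}$, and $\langle\cdot\rangle^{-N}\in L^1_{v_s}(\rdd)$ once $N>2d+s$. Given $T\in FIO(\cA,v_s)$, Theorem~\ref{tc1}(iv) gives $\mu(\cA)\inv\in FIO(\cA\inv,v_s)$, and then Theorem~\ref{tc1}(iii) gives $P:=T\mu(\cA)\inv\in FIO(I,v_s)$ and $Q:=\mu(\cA)\inv T\in FIO(I,v_s)$. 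So everything reduces to the equivalence
\[
P\in FIO(I,v_s)\quad\Longleftrightarrow\quad P=\sigma^w(x,D)\ \text{ for some }\ \sigma\in M^{\infty,1}_{1\otimes v_s}(\rdd),
\]
applied to $P$ and to $Q$: this yields $T=\sigma_1^w(x,D)\mu(\cA)=\mu(\cA)\sigma_2^w(x,D)$ with both symbols in $M^{\infty,1}_{1\otimes v_s}(\rdd)$. The converse implication in~(i) follows by reading this backwards and applying Theorem~\ref{tc1}(iii) once more, using $\sigma_j^w(x,D)\in FIO(I,v_s)$ and $\mu(\cA)\in FIO(\cA,v_s)$.

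To prove the displayed equivalence I would invoke the classical identity relating the Weyl transform to the cross-Wigner distribution $W$: with a fixed Gaussian $g\in\cS(\rd)$,
\[
\langle\sigma^w(x,D)\pi(z)g,\pi(w)g\rangle=\langle\sigma,W(\pi(w)g,\pi(z)g)\rangle=e^{i\gamma(z,w)}\,\overline{V_{W(g,g)}\sigma\bigl(\tfrac{w+z}{2},cJ(w-z)\bigr)},
\]
where $\gamma$ is real-valued, $c\neq0$ is a fixed constant, and $W(g,g)\in\cS(\rdd)$ is a Gaussian. Since $|J(w-z)|=|w-z|$, the bound \eqref{asterisco} with $\cA=I$, namely $|\langle P\pi(z)g,\pi(w)g\rangle|\le H(w-z)$ with $H\in L^1_{v_s}(\rdd)$, is then equivalent to $\sup_{u\in\rdd}|V_{W(g,g)}\sigma(u,\zeta)|$ being dominated by an $L^1_{v_s}(\rdd)$-function of $\zeta$, which by definition~\eqref{otto} and the window-independence of modulation spaces means exactly $\sigma\in M^{\infty,1}_{1\otimes v_s}(\rdd)$. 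This proves~(i).

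For part~(ii) the starting point is the explicit formula, valid when $\det A\neq0$,
\[
\mu(\cA)f(x)=c_0\int_{\rd}e^{2\pi i\Phi(x,\eta)}\hat f(\eta)\,d\eta,\qquad |c_0|=|\det A|^{-1/2},
\]
with $\Phi$ the quadratic phase~\eqref{fase}; I would verify it either directly from the intertwining relation~\eqref{metap} tested on $\pi(z)$, or by factoring $\cA$ into a partial Fourier transform, a dilation, and two shear matrices, whose metaplectic operators are the Fourier transform, a dilation, and chirp multiplications. Given this, for $T\in FIO(\cA,v_s)$ I use~(i) to write $T=\sigma_1^w(x,D)\mu(\cA)$ and move $\sigma_1^w(x,D)$ inside the $\eta$-integral; since the Weyl operator applied to the chirp $x\mapsto e^{2\pi i\Phi(x,\eta)}$ returns $e^{2\pi i\Phi(x,\eta)}$ times a new amplitude, this yields~\eqref{eq:kh22} with
\[
\sigma(x,\eta)=c_0\,e^{-2\pi i\Phi(x,\eta)}\bigl(\sigma_1^w(x,D)\,e^{2\pi i\Phi(\cdot,\eta)}\bigr)(x).
\]
Evaluating the Gabor matrix of $\sigma$ shows that $\sigma_1\mapsto\sigma$ amounts to a chirp modulation followed by an invertible linear change of variables in phase space (invertible because $\det A\neq0$), operations that preserve $M^{\infty,1}_{1\otimes v_s}(\rdd)$; hence $\sigma$ lies in this class. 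Conversely, if $T$ is given by~\eqref{eq:kh22} with $\sigma\in M^{\infty,1}_{1\otimes v_s}(\rdd)$, the inverse substitution exhibits $T\mu(\cA)\inv$ as a Weyl operator with symbol in $M^{\infty,1}_{1\otimes v_s}(\rdd)$, so $T\in FIO(I\cdot\cA,v_s)=FIO(\cA,v_s)$ by~(i) and Theorem~\ref{tc1}(iii).

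The algebraic closure properties are already in hand from Theorem~\ref{tc1}, and the linear algebra with the phase $\Phi$ and the factorization of $\cA$ is routine bookkeeping. The main obstacle is the symbol-class stability underlying both parts: showing that the Gabor (kernel) matrix of the transformed symbol still decays like an $L^1_{v_s}$-function in the frequency variable \emph{uniformly} over the space variable, after composition with $\mu(\cA)$ in part~(i) and after the chirp-and-shear conjugation in part~(ii). This uniform, non-smooth control is exactly what separates the Sj\"ostrand-type class $M^{\infty,1}_{1\otimes v_s}$ from the classical H\"ormander classes; I expect the proof to rest on the Gabor-frame characterization of these symbol classes and on the triangular structure of the relevant phase-space substitutions.
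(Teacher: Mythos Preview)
Your approach matches the paper's: part~(i) is the paper's Theorem~\ref{pseudomu}, proved exactly as you outline by reducing to the characterization of $FIO(\mathrm{Id},v_s)$ (Proposition~\ref{charpsdo}), and part~(ii) is the paper's Theorem~\ref{rappresentazione}, which likewise writes $\mu(\cA)$ as a type~I FIO and shows (Proposition~\ref{aggiunta0}) that pre-composition with $\sigma_1^w$ yields a new type~I FIO whose symbol arises from $\sigma_1$ via explicit $M^{\infty,1}_{1\otimes v_s}$-preserving maps (two linear changes of variables and the Weyl-to-Kohn--Nirenberg transform). The only cosmetic differences are that the paper gets $\mu(\cA)\inv\in FIO(\cA\inv,v_s)$ directly from $\mu(\cA)\inv=\mu(\cA\inv)$ and \eqref{eq:kh21} rather than via Theorem~\ref{tc1}(iv), and obtains $\sigma_2=\sigma_1\circ\cA$ from the symplectic covariance of the Weyl calculus rather than by repeating the argument with $Q=\mu(\cA)\inv T$.
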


In view of the factorization~\eqref{eq:kh23}   the class of
generalized metaplectic operators is just the algebra generated by the
metaplectic representation and the algebra of \psdo s with a symbol in
the \modsp \,$\mifs(\rdd) $. 
 
\begin{remark}\rm
  (i) In general, the composition of two type I Fourier integral
  operators  is not of type I, even if the phase is quadratic. 
  Definition~\ref{def1.1}  and Theorem~\ref{tc1} solve the 
  problem of constructing an   algebra of FIOs  in a trivial way. 

(ii) An  especially attractive feature of generalized metaplectic
operators  is the  decay of the associated kernel
off a subspace that is determined by the symplectic transformation of
phase space. We expect this fact  to
be a useful tool in the numerical analysis of generalized metaplectic
operators. See~\cite{fio3,fio-evolution}  for first results.   

(iii) Theorems~\ref{tc1} and \ref{tc2} are preceded by numerous
contributions in a similar spirit and hold on various levels of
generality.  

If $\A = \mathrm{Id}$, Theorem~\ref{tc1} goes back to
Sj\"ostrand~\cite{wiener30,wiener31} and describes the
properties of an important class of \psdo s with symbols in $\mifs $,
nowadays called Sj\"ostrand class. The \tfa\ of the Sj\"ostrand class
and its characterization by the off-diagonal decay \eqref{asterisco} was
developed in ~\cite{charly06,GR}.  Indeed, our initial motivation was
to study these results in the context of FIOs, and  our techniques rely heavily
on the \psdo\ calculus developed in~\cite{charly06}. 

The sparsity of genuine FIOs with a tame phase and a symbol in
$S^0_{0,0}$  with respect to \tfs s
was established in~\cite{fio1}. In fact, much of the later research tried
to relax the smoothness assumptions on the symbol. For instance, 
in \cite{Wiener} we considered FIOs with a tame phase and a more rigid
polynomial decay condition \eqref{eq:kh21} of the corresponding kernel. However, the
case of $L^1$-decay ~\eqref{asterisco} presented additional technical
difficulties, which we will solve in Section 3.

The boundedness of FIOs with a symbol in $\mif $ on $\lrd $ was
studied in \cite{wiener6,wiener7,wiener8,wiener9,fio1}. 
\end{remark}

Our  main contribution and  innovation is the  application of generalized  metaplectic
operators to the study of the Schr\"odinger equation with an initial
condition in a \modsp . We consider a Hamiltonian of the form 
$$
H = a^w (x,D)  + \sigma ^w(x,D)
$$
where $a$ is  a real-valued, quadratic, homogeneous polynomial on
$\rdd $ and $\sigma $ is a symbol in a \modsp\ $\mifs $. The standard
Hamiltonians are $a(x,\xi ) = |\xi| ^2$ with $a^w(x,D) = - \Delta $ and
$\sigma = 0$  for the free Schr\"odinger  equation and $a(x,\xi )  =
|\xi| ^2+|x|^2$ and $\sigma = 0$ for the harmonic oscillator. For these
cases the one-parameter evolution group is given by a metaplectic
operator $e^{itH}= \mu (\cA _t)$ for a one-parameter subgroup $t\mapsto \A _t$
of the symplectic group~\cite{folland89}. In this case the time evolution
leaves a large class of \modsp s invariant, as was shown in
~\cite{Benyi,wh}
for the free Schr\"odinger equation by studying Fourier multipliers on
\modsp s and in ~\cite{fio5} by checking the invariance properties of
\modsp s.

It is natural to conjecture that similar results hold for the
Schr\"odinger equation with potentials. The following result offers an
answer under rather general conditions and connects with generalized
metaplectic operators. 

\begin{theorem} \label{tc3}
  Let $a$ be  a real-valued, quadratic, homogeneous polynomial on
$\rdd $, $\sigma $  a symbol in a \modsp\ $\mifs (\rdd)$ and $H=
a^w(x,D) + \sigma ^w(x,D)$. Then for every $t\in \bR $ the
corresponding operator $e^{itH}$ is a generalized metaplectic
operator.
 
Consequently, the Schr\"odinger equation preserves the phase-space
concentration: if $ u(0,\cdot ) = f \in M^p_{v_s}(\rd)$, then also
$u(t,\cdot )
= e^{itH}f\in  M^p_{v_s}(\rd)$, for all $t\in\bR$. 
\end{theorem}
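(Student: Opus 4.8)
The plan is to pass to the interaction picture with respect to the metaplectic part of the flow and then resum a Dyson series inside the Banach algebra $FIO(\mathrm{Id},v_s)$. Since $a$ is a real quadratic homogeneous polynomial, $a^w(x,D)$ is (essentially) self-adjoint on $\lrd$ and generates the classical Hamiltonian flow $t\mapsto\mathcal{A}_t$, a one-parameter subgroup of $\Spnr$, with $e^{ita^w(x,D)}=\mu(\mathcal{A}_t)$ up to an irrelevant phase (see \cite{folland89}). First I would observe that the constant symbol $1$ lies in $\mifs(\rdd)$: by \eqref{otto}, $\sup_{z}|\langle 1,\pi(z,\zeta)g\rangle|$ is a constant multiple of $|\widehat g(\zeta)|$, and $\int_{\rdd}|\widehat g(\zeta)|v_s(\zeta)\,d\zeta<\infty$ for $g\in\cS(\rd)$. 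Hence the trivial factorizations $\mu(\mathcal{A}_t)=1^w(x,D)\,\mu(\mathcal{A}_t)=\mu(\mathcal{A}_t)\,1^w(x,D)$ together with Theorem~\ref{tc2}(i) give $\mu(\mathcal{A}_t)\in FIO(\mathcal{A}_t,v_s)$ and, likewise, $\mu(\mathcal{A}_t)^{-1}=\mu(\mathcal{A}_t^{-1})\in FIO(\mathcal{A}_t^{-1},v_s)$ for every $t$. Because $t\mapsto\mathcal{A}_t$ is continuous and the metaplectic kernel estimate \eqref{eq:kh21} depends locally uniformly on the matrix, $t\mapsto\|\mu(\mathcal{A}_t)\|_{FIO(\mathcal{A}_t,v_s)}$ and $t\mapsto\|\mu(\mathcal{A}_t^{-1})\|_{FIO(\mathcal{A}_t^{-1},v_s)}$ are bounded on every compact interval.

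Next, since $\sigma^w(x,D)$ is bounded on $\lrd$ (this is Theorem~\ref{tc1}(ii) with $\mathcal{A}=\mathrm{Id}$, i.e.\ Sj\"ostrand's theorem), $H=a^w(x,D)+\sigma^w(x,D)$ is a bounded perturbation of a group generator, so $U(t)=e^{itH}$ is a strongly continuous group on $\lrd$. Set $W(t)=\mu(\mathcal{A}_t)^{-1}U(t)$. Differentiating and using $-ia^w+iH=i\sigma^w$ together with the fact that $a^w$ commutes with $\mu(\mathcal{A}_t)$, one obtains the Cauchy problem
\[
\partial_t W(t)=i\,B(t)\,W(t),\qquad W(0)=\mathrm{Id},\qquad B(t):=\mu(\mathcal{A}_t)^{-1}\sigma^w(x,D)\,\mu(\mathcal{A}_t).
\]
By the symplectic covariance of the Weyl calculus, $B(t)=(\sigma\circ\mathcal{A}_t)^w(x,D)$; equivalently, applying Theorem~\ref{tc1}(iii) twice, $B(t)\in FIO(\mathcal{A}_t^{-1}\,\mathrm{Id}\,\mathcal{A}_t,v_s)=FIO(\mathrm{Id},v_s)$, and the quantitative form of the composition rule gives
\[
\|B(t)\|_{FIO(\mathrm{Id},v_s)}\lesssim\|\mu(\mathcal{A}_t^{-1})\|_{FIO(\mathcal{A}_t^{-1},v_s)}\,\|\sigma^w(x,D)\|_{FIO(\mathrm{Id},v_s)}\,\|\mu(\mathcal{A}_t)\|_{FIO(\mathcal{A}_t,v_s)}.
\]
By the previous paragraph the right-hand side is bounded by a constant $C_T$ for $|t|\le T$.

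Now I would resum. Since $FIO(\mathrm{Id},v_s)$ is a Banach algebra under composition (Theorem~\ref{tc1}(i),(iii)) and is complete, the Dyson series
\[
W(t)=\sum_{n\ge0}i^n\!\!\int_{0\le t_n\le\cdots\le t_1\le t}\!\!B(t_1)\cdots B(t_n)\,dt_1\cdots dt_n
\]
converges absolutely in $FIO(\mathrm{Id},v_s)$, the $n$-th term having norm $\le(C\,C_T)^n|t|^n/n!$ for $|t|\le T$; hence $W(t)\in FIO(\mathrm{Id},v_s)$ for every $t\in\R$. (That this series indeed represents $\mu(\mathcal{A}_t)^{-1}U(t)$ follows from the uniqueness of the Cauchy problem in $B(\lrd)$, into which $FIO(\mathrm{Id},v_s)$ embeds continuously, using the strong continuity and local boundedness of $s\mapsto B(s)$.) Finally, by Theorem~\ref{tc1}(iii),
\[
e^{itH}=U(t)=\mu(\mathcal{A}_t)\,W(t)\in FIO(\mathcal{A}_t,v_s)\subset FIO(\Spnr,v_s),\qquad t\in\R,
\]
which is the first assertion. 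The second follows at once from Theorem~\ref{tc1}(ii): $e^{itH}$ is bounded on $M^p_{v_s}(\rd)$ for $1\le p\le\infty$, so $u(0,\cdot)=f\in M^p_{v_s}(\rd)$ forces $u(t,\cdot)=e^{itH}f\in M^p_{v_s}(\rd)$ for all $t$.

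I expect the main obstacle to be the uniform control in the second paragraph: showing that conjugating the Sj\"ostrand-class operator $\sigma^w(x,D)$ by the metaplectic operators $\mu(\mathcal{A}_t)$ keeps it in $FIO(\mathrm{Id},v_s)\cong\mifs$ with norm bounded locally uniformly in $t$ — equivalently, the $GL(2d,\R)$-invariance of $\mifs$ with constants depending only on bounds for $\mathcal{A}_t^{\pm1}$, which here is routed through the quantitative composition estimate of Theorem~\ref{tc1}(iii) and the local boundedness of $t\mapsto\mathcal{A}_t$. Once this estimate is secured, completeness of the Wiener algebra $FIO(\mathrm{Id},v_s)$ reduces the rest to a standard Picard/Dyson-series argument.
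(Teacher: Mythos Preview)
Your overall strategy---passing to the interaction picture $W(t)=\mu(\mathcal{A}_t)^{-1}e^{itH}$ and summing the Dyson series for $W(t)$ inside $FIO(\mathrm{Id},v_s)$---is exactly the route the paper takes (Theorem~\ref{teofinal}). The gap is in your ``Now I would resum'' paragraph. To interpret
\[
\int_{0\le t_n\le\cdots\le t_1\le t}B(t_1)\cdots B(t_n)\,dt_1\cdots dt_n
\]
as an element of $FIO(\mathrm{Id},v_s)\cong\{\tau^w:\tau\in\mifs\}$ and to bound its norm there by $(CC_T)^n|t|^n/n!$, you are implicitly treating this as a Bochner integral in $\mifs$. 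But the map $t\mapsto B(t)=(\sigma\circ\mathcal{A}_t)^w$ is in general \emph{not} continuous for the $\mifs$-norm (the paper flags this explicitly), and $\mifs$ is nonseparable, so neither Bochner measurability nor the inequality $\|\int\cdots\|_{\mifs}\le\int\|\cdots\|_{\mifs}$ is available. The strong continuity on $\lrd$ that you invoke defines the integral only in $B(\lrd)$; it does not by itself produce membership in, or a norm bound in, $FIO(\mathrm{Id},v_s)$. Incidentally, the obstacle you anticipate---the locally uniform bound on $\|B(t)\|_{FIO(\mathrm{Id},v_s)}$---is \emph{not} the problem: that is exactly Lemma~\ref{lkh1}(ii) and the paper's Step~3.

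The paper's fix is to avoid the operator-valued integral altogether and work pointwise with the essential kernel. If $H_r\in L^1_{v_s}(\rdd)$ is the controlling function of $B(r)$ from Proposition~\ref{charpsdo}, then $(r,z)\mapsto H_r(z)$ is lower semicontinuous, hence Borel measurable, and $\sup_{0\le r\le t}\|H_r\|_{L^1_{v_s}}\le M(t)\|\sigma\|_{\mifs}$ by Lemma~\ref{lkh1}(ii). One then bounds the scalar quantity $|\langle C_n(t)\pi(z)g,\pi(w)g\rangle|$ by $\int_{\text{simplex}} (H_{t_1}\ast\cdots\ast H_{t_n})(w-z)\,dt_1\cdots dt_n$, where Fubini--Tonelli for nonnegative functions applies without any continuity hypothesis, and the $L^1_{v_s}$-norm of this is at most $t^nM(t)^n\|\sigma\|_{\mifs}^n/n!$. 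Summing in $L^1_{v_s}$ and applying Proposition~\ref{charpsdo} once more gives $W(t)=C(t)\in FIO(\mathrm{Id},v_s)$. In short, your outline is correct, but the resummation has to be carried out at the level of the time-frequency kernel rather than in the symbol norm.
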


In fact, the precise formulation yields more information. If $t\mapsto
\A _t$ describes the solution of the classical equations of motion
with Hamiltonian $a(x,\xi )$ in phase-space, then there exists a symbol
$b_t \in \mifs $, such that  
\begin{equation}
  \label{eq:kh27}
 e^{itH }  = \mu (\A _t) b_t(x,D) \, .
\end{equation}

The main insight of Theorem~\ref{tc3}  is that the quality of
phase-space concentration  is
preserved by the physical time evolution, when \modsp\ norms are used
as  the natural measure of
phase-space concentration. This is  in contrast to the
standard theory with $L^p$-spaces. Indeed, it is well known that even the free propagator $e^{it\Delta}$ is not bounded in $L^p(\rd)$ except for $p=2$ (see e.g.\ \cite[pag. 74]{tao}).  This shows once more  that \modsp s are a useful
concept for the quantitative measurement of phase space
concentration. 
 
\begin{remark}\rm 
(i) Note that the perturbation $\sigma ^w(x,D)$ may be a genuine
\psdo\ with a non-smooth symbol, not just a multiplication
operator. 

(ii) The boundedness of $e^{itH } $ on \modsp s for $\sigma ^w(x,D) =
V(x)$ with second derivatives  in the H\"ormander class
$ S^0_{0,0}$ was treated in~\cite{kki4}, various estimates for the
Schr\"odinger propagator with Hamiltonian $H=-\Delta +|x|^2$ were
obtained in~\cite{cnr,kki1,kki2,kki3}, and  contributions to the
$L^p$-theory are contained in~\cite{js94,js95}.   

(iii)  For a special subclass $Z_n$ of the H\"ormander class
$S^0_{0,0}$, a factorization of the form~\eqref{eq:kh27} was found by
Weinstein~\cite{Wein85}. Subsequently,  integral representations of the evolution group with a
Hamiltonian $H=-\Delta +xA x+ V(x)$ for $V\in S^0_{0,0}$ were
obtained in~\cite{gz}. Theorem~\ref{tc3} is  much  more general,
and  it also  offers the
additional insight of how the evolution group is related to the
metaplectic representation.  

(iv)  If the Hamiltonian $a(x,\xi)$ is real-valued and satisfies
$\partial^\alpha_{x,\xi} a\in S^0_{0,0}$ for $|\alpha|\geq 2$, then,  by
classical results~\cite{wiener1,wiener3,wiener4,Wiener,
  tataru}, 
the Schr\"odinger propagator is a FIO of type I represented by \eqref{eq:kh22} for $t>0$
 sufficiently small.  By the results in~\cite{Wiener} this extends to all  $t\in\bR$  away from caustics.   \par 

\end{remark}

Theorem~\ref{tc3} contains as special cases several known results (cf.\ \cite{Benyi,fio5,fio1}) about the action of the Schr\"odinger propagator $e^{itH}$ on \modsp s. 

The paper is organized as follows: Section~2 contains the preliminary
notions from \tfa . In Section~3 we study the main properties of
generalized metaplectic operators. Section~4 contains the analysis of
the Schr\"odinger equation. Section~5 relates the generalized
metaplectic operators with Fourier integral operators. 

\vskip0.3truecm
\textbf{Notation.} We write $xy=x\cdot y$ for  the scalar product on
$\Ren$ and $|t|^2=t\cdot t$ for $t,x,y \in\Ren$.

The Schwartz class is denoted by
$\sch(\Ren)$, the space of tempered
distributions by  $\sch'(\Ren)$.   We
use the brackets  $\la f,g\ra$ to
denote the extension to $\sch '
(\Ren)\times\sch (\Ren)$ of the inner
product $\la f,g\ra=\int f(t){\overline
{g(t)}}dt$ on $L^2(\Ren)$. The Fourier
transform is normalized to be ${\hat
  {f}}(\o)=\Fur f(\o)=\int
f(t)e^{-2\pi i t\o}dt$.

Sometimes we will use 
\begin{equation}\label{tre}
T_x f(t)=f(t-x),\qquad M_\eta f(\eta)= e^{2\pi i \eta t} f(t),\quad t,x,\eta \in\rd.
\end{equation}
for the operators of translation and modulation acting on a function
or distribution  $f$. Their composition is then  the time-frequency shift
$\pi(z)=M_\eta T_x$.

We  write
$A \asymp B$  for the equivalence  $c^{-1}B\leq
A\leq c B$.

\vskip0.3truecm
\section{Preliminaries}
  We recall the basic
concepts  of \tfa\ and  refer the  reader to \cite{book} for the full
details.
\subsection{The Short-time Fourier Transform }
Consider a distribution $f\in\cS '(\rd)$
and a Schwartz function $g\in\cS(\rd)\setminus\{0\}$ (the so-called
{\it window}).
The short-time Fourier transform (STFT) of $f$ with respect to $g$ is $$V_gf (z) = \langle f, \pi (z)g\rangle, \quad z=(x,\xi)\in\rdd.$$
 The  \stft\ is well-defined whenever  the bracket $\langle \cdot , \cdot \rangle$ makes sense for
dual pairs of function or (ultra-)distribution spaces, in particular for $f\in
\cS ' (\rd )$ and $g\in \cS (\rd )$,  or for $f,g\in\lrd$.
\par

\subsection{Modulation and amalgam spaces}
Weighted modulation spaces measure the decay of the STFT on the time-frequency (phase space) plane and were introduced by Feichtinger in the 80's \cite{F1}.

\emph{Weight Functions.}  A weight function $v$ is submultiplicative if $ v(z_1+z_2)\leq v(z_1)v(z_2)$, for all $z_1,z_2\in\Renn.$  We shall work with the weight functions
\begin{equation} v_s(z)=\la z\ra^s=(1+|z|^2)^{\frac s 2},\quad s\in\R,
\end{equation}
which are submultiplicative for $s\geq0$.\par
If $\cA\in \mathrm{GL}(d,\bR )$, then  $|\cA z|$ defines an equivalent
norm on $\rdd$, hence for every $s\in\R$, there exist $C_1,C_2>0$ such
that
\begin{equation}\label{pesieq}
C_1 v_s(z)\leq v_s(\cA z)\leq C_2 v_s(z),\quad \forall z\in\rdd.
\end{equation}
 For $s\geq0$, we denote by $\mathcal{M}_{v_s}(\rdd)$ the space of $v_s$-moderate weights on $\rdd$;
these  are measurable positive functions $m$ satisfying $m(z+\zeta)\leq C
v_s(z)m(\zeta)$ for every $z,\zeta\in\rdd$.

\begin{definition}  \label{prva}
Given  $g\in\cS(\rd)$, $s\geq0$, a  weight
function $m\in\mathcal{M}_{v_s}(\rdd)$, and $1\leq p,q\leq
\infty$, the {\it
  modulation space} $M^{p,q}_m(\Ren)$ consists of all tempered
distributions $f\in \cS' (\rd) $ such that $V_gf\in L^{p,q}_m(\Renn )$
(weighted mixed-norm spaces). The norm on $M^{p,q}_m(\rd)$ is
\begin{equation}\label{defmod}
\|f\|_{M^{p,q}_m}=\|V_gf\|_{L^{p,q}_m}=\left(\int_{\Ren}
  \left(\int_{\Ren}|V_gf(x,\o)|^pm(x,\o)^p\,
    dx\right)^{q/p}d\o\right)^{1/q}  \,
\end{equation}
(with obvious changes for $p=\infty$ or $q=\infty$).
\end{definition}
 When $p=q$, we simply write $M^{p}_m(\rd)$ instead of
 $M^{p,p}_m(\rd)$. The spaces $M^{p,q}_m(\rd)$ are Banach spaces,  and
 every nonzero $g\in M^{1}_{v_s}(\rd)$ yields an equivalent norm in
 \eqref{defmod}. Thus  $M^{p,q}_m(\Ren)$ is independent of the choice
 of $g\in  M^{1}_{v_s}(\rd)$.
\par We  recall the inversion formula for
the STFT (see  (\cite[Proposition 11.3.2]{book}): assume $g\in M^{1}_v(\rd)\setminus\{0\}$,
 $f\in M^{p,q}_m(\rd)$, then
\begin{equation}\label{invformula}
f=\frac1{\|g\|_2^2}\int_{\R^{2d}} V_g f(z) \pi (z)  g\, dz \, , 
\end{equation}
and the  equality holds in $M^{p,q}_m(\rd)$.\par
 The adjoint operator of $V_g$,  defined by
 $$V_g^\ast F(t)=\intrdd F(z)  \pi (z) g dz \, , 
 $$
 maps the Banach space $L^{p,q}_m(\rdd)$ into $M^{p,q}_m(\rd)$. In particular, if $F=V_g f$ the inversion formula \eqref{invformula} becomes
 \begin{equation}\label{treduetre}
 {\rm Id}_{M^{p,q}_m}=\frac 1 {\|g\|_2^2} V_g^\ast V_g.
 \end{equation}
Finally we say that a measurable function $F(x,\eta)$ belongs to the amalgam space $W(L^\infty,L^{p,q}_m)$ if the function
\[
\tilde{F}(x,\eta):=\essupp _{(u,v)\in [0,1]^{2d}}|F(x+u,\eta+v)|
\] belongs to $L^{p,q}_{m}$; we then set
\[
\|F\|_{W(L^\infty,L^{p,q}_m)}:=\|\tilde{F}\|_{L^{p,q}_{m}}.
\]
\par
We denote by $W(C^0,L^{p,q}_m)$ the subspace of continuous functions,
with the induced norm
$\|F\|_{W(C^0,L^{p,q}_m)}=\|F\|_{W(L^\infty,L^{p,q}_m)}$.
The amalgam spaces obey the  following convolution relation
\cite[Theorem 11.1.5]{book}: if $1\leq p,q\leq\infty$, $s\geq0$, $m\in
\mathcal{M}_{v_s}(\rdd)$, then
\begin{equation}\label{conv-amalgam}
L^{p,q}_m\ast W(C^0,L^{1}_{v_s})\subset W(C^0,L^{1}_{v_s}).
\end{equation}

\subsection{Metaplectic Operators}

Given a symplectic matrix $\cA \in Sp(d,\R)$, the corresponding
metaplectic operator $\mu (\cA )$ is defined by the intertwining
relation
\begin{equation}\label{metap1}
\pi (\cA z) = c_\cA  \, \mu (\cA ) \pi (z) \mu (\cA )\inv  \quad  \forall
z\in \rdd \, ,
\end{equation}
where $c_\cA \in \bC , |c_{\cA } | =1$ is a phase factor (for details, see e.g. \cite{folland89}).
The following properties of metaplectic operators are important in our
\tf\ approach to FIOs.

\begin{lemma} \label{lkh1}
  (i) Fix $g\in \cS (\rd )$ and $\A \in Sp(d,\R)$, then, for all $N\geq
  0$,
  \begin{equation}
    \label{eq:kh1}
    |\langle \mu (\A )\pi (z)g, \pi (w)g\rangle | \leq C_N \langle
    w-\A z\rangle ^{-N}  \, .
  \end{equation}
(ii) If $\sigma \in \mif _{1\otimes v_s} (\rdd) $ and $\A \in Sp(d,\R)$,
then $\sigma \circ \A \in \mif _{1\otimes v_s}(\rdd)$ and
\begin{equation}
  \label{eq:kh2}
\|\sigma \circ \A \inv \|_{\mif _{1\otimes v_s}} \leq   \|(\A ^T)\inv
\|^s \, \|V_{\Phi
  \circ \A  } \Phi \|_{L^1_{v_s} } \|\sigma   \|_{\mif
  _{1\otimes v_s}},
\end{equation}
for a non-zero window $\Phi \in \cS (\rdd )$.
\end{lemma}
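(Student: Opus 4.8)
\emph{Proof proposal.} \emph{Part (i).} The plan is to reduce \eqref{eq:kh1} to the rapid decay of the STFT of a Schwartz function. Rewriting the intertwining relation \eqref{metap1} as $\mu(\A)\pi(z)=\overline{c_\A}\,\pi(\A z)\mu(\A)$ gives
\[
\langle\mu(\A)\pi(z)g,\pi(w)g\rangle=\overline{c_\A}\,\langle\mu(\A)g,\pi(\A z)^{-1}\pi(w)g\rangle .
\]
Since the time-frequency shifts form a projective representation, $\pi(\A z)^{-1}\pi(w)=c(z,w)\,\pi(w-\A z)$ with $|c(z,w)|=1$, and hence the modulus of the right-hand side equals $|V_g(\mu(\A)g)(w-\A z)|$. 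Now $\mu(\A)$ maps $\cS(\rd)$ into itself (it is a composition of Fourier transforms, dilations, and multiplications by chirps), so $\mu(\A)g\in\cS(\rd)$; therefore $V_g(\mu(\A)g)\in\cS(\rdd)$ decays faster than any polynomial, which is exactly \eqref{eq:kh1}.

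\emph{Part (ii).} The engine is the transformation rule of the STFT under a linear change of coordinates: a substitution $t=\A s$ in the defining integral, using $|\det\A|=1$, gives for any window $\Phi$
\[
V_\Phi(\sigma\circ\A^{-1})(z,\zeta)=V_{\Phi\circ\A}\,\sigma(\A^{-1}z,\A^T\zeta),\qquad z,\zeta\in\rdd .
\]
Taking the supremum over $z$, then substituting $\zeta\mapsto(\A^T)^{-1}\zeta$ and using $v_s((\A^T)^{-1}\zeta)\le\|(\A^T)^{-1}\|^{s}v_s(\zeta)$ (cf.\ \eqref{pesieq}, noting $\|(\A^T)^{-1}\|\ge1$ for symplectic $\A$), one bounds $\|\sigma\circ\A^{-1}\|_{\mifs}$ by $\|(\A^T)^{-1}\|^{s}$ times the $\mifs$-norm of $\sigma$ computed with the rotated window $\Phi\circ\A$ instead of $\Phi$. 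To revert to the window $\Phi$ I would use the change-of-window inequality (a consequence of the inversion formula \eqref{invformula}), $|V_{\Phi\circ\A}\sigma|\le\|\Phi\|_2^{-2}\,|V_\Phi\sigma|\ast|V_{\Phi\circ\A}\Phi|$: applying $\sup_z$ and integrating against $v_s(\zeta)$, the convolution in the frequency variable is controlled by Young's inequality for $v_s$-weighted $L^1$ (submultiplicativity of $v_s$), while the $z$-variable is handled by the elementary bound $\sup_z|a\ast b|\le(\sup|a|)\,\|b\|_{L^1}$ applied for fixed frequencies. The cross term then contributes precisely $\iint|V_{\Phi\circ\A}\Phi(z,\zeta)|\,v_s(\zeta)\,dz\,d\zeta\le\|V_{\Phi\circ\A}\Phi\|_{L^1_{v_s}}$, yielding the stated inequality (up to the harmless normalization $\|\Phi\|_2^{2}$, which can be set to $1$). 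Replacing $\A$ by $\A^{-1}$ gives in particular $\sigma\circ\A\in\mifs$.

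\emph{Main obstacle.} Part (i) and the dilation identity are routine; the only step that requires genuine care is the window change in Part (ii) — verifying that the asymmetric mixed norm ($L^\infty$ in the phase-space variable $z$, weighted $L^1$ in the frequency variable $\zeta$) interacts correctly with the convolution arising from the reproducing formula and produces exactly the factor $\|V_{\Phi\circ\A}\Phi\|_{L^1_{v_s}}$. Everything else is bookkeeping of Jacobians (all equal to $1$ for symplectic matrices) and the submultiplicativity of $v_s$.
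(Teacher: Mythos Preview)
Your proposal is correct and follows essentially the same route as the paper. For Part~(i) the paper simply cites an earlier reference, whereas you supply the direct argument via the intertwining relation and the Schwartz invariance of $\mu(\A)$; for Part~(ii) both you and the paper compute $V_\Phi(\sigma\circ\A^{-1})(z,\zeta)=V_{\Phi\circ\A}\sigma(\A^{-1}z,\A^T\zeta)$, bound the weight by $\|(\A^T)^{-1}\|^s$, and then change windows---the paper quotes the estimate $\|V_{\Phi\circ\A}\sigma\|_{L^{\infty,1}_{1\otimes v_s}}\le\|V_{\Phi\circ\A}\Phi\|_{L^1_{v_s}}\|V_\Phi\sigma\|_{L^{\infty,1}_{1\otimes v_s}}$ from the textbook, while you sketch its proof from the convolution inequality, which is precisely how that textbook estimate is obtained.
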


\begin{proof}
  (i) is Proposition~5.3 from~\cite{Wiener}.

(ii) The invariance is folklore. To see \eqref{eq:kh2},
fix a non-zero window $\Phi \in \cS (\rdd )$. Then
\begin{align*}
  \|\sigma \circ \A \inv  \|_{\mif _{1\otimes v_s}} &= \intrdd \sup _{z\in
    \rdd } |\langle \sigma \circ \A \inv , M_\zeta T_z \Phi \rangle
  | v_s(\zeta ) \, d\zeta \\
&= \intrdd \sup _{z\in
    \rdd } |\langle \sigma  , M_{\A ^T \zeta } T_{\A \inv
    z} (\Phi \circ \A ) \rangle
  | v_s(\zeta ) \, d\zeta \\
&= \intrdd \sup _{z\in
    \rdd } |\langle \sigma  , M_{ \zeta } T_{z
    } (\Phi \circ \A \rangle )
  | v_s((\A ^T)\inv \zeta )  \, d\zeta \, .
\end{align*}
First, since $|(\A ^T)\inv \zeta| \leq \|(\A ^T)\inv \| \, | \zeta |$
where
we use the spectral norm $\|\A\|$ of a matrix, we obtain $v_s((\A
^T)\inv \zeta ) \leq   \|(\A ^T)\inv
\|^s v_s(\zeta )$. Second, the
change of windows yields an equivalent norm of $\mif _{1\otimes v_s}$,
and the estimate (11.31) of \cite{book} yields the following estimate:
$$
  \|V_{\Phi \circ \A }\sigma \|_{L^{\infty , 1} _{1\otimes v_s}} \leq
  \|V_{\Phi \circ \A  } \Phi \|_{L^1_{v_s} } \|V_\Phi \sigma
  \|_{L^{\infty , 1} _{1\otimes v_s}} \, .
$$
The combination of  both estimates yields
$$
\|\sigma \circ \A \inv \|_{\mif _{1\otimes v_s}} \leq   \|(\A ^T)\inv
\|^s\, \|V_{\Phi
  \circ \A  } \Phi \|_{L^1_{v_s} } \|\sigma   \|_{\mif
  _{1\otimes v_s}} \, .
$$
\end{proof}

\subsection{Pseudodifferential Operators}
In the proofs of the main theorems we will make decisive use of the
following properties of pseudodifferential operators  with a symbol in
a generalized Sj\"ostrand class. To shorten the notations, from now on we will use the $\sigma^w$ notation instead of $\sigma^w(x,D)$.

\begin{proposition}[\cite{charly06}]
  \label{charpsdo}
Fix $g \in M^1_{v_s}(\rd )$ and let $\sigma \in M^\infty _{1\otimes v_s}(\rdd )$. Then $\sigma \in
\mifs (\rdd )$ \fif\ there exists a function $H\in  L^1_{v_s}(\rdd )$ such that
\begin{equation}
  \label{eq:kh9}
  |\langle \sigma ^w \pi (w) g , \pi (z) g\rangle | \leq H(w-z) \qquad
  \forall w,z \in \rdd \, .
\end{equation}
The function $H$ can be chosen as
\begin{equation}
  \label{eq:kh12}
H(z)=\sup_{u\in\rdd}|V_{\Phi} \sigma (u,j(z))|,
\end{equation}
where $j(z)=(z_2,-z_1)$ for $z=(z_1,z_2)\in\rdd$ and the  window
function $\Phi=W(g,g)$ is  the Wigner distribution of $g$.
\end{proposition}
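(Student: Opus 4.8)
The plan is to reduce the statement to a single pointwise identity relating the matrix coefficients $\langle \sigma^w\pi(w)g,\pi(z)g\rangle$ to the short-time Fourier transform of the symbol $\sigma$; once this identity is in hand, both implications collapse to a change of variables. Every pairing below is well defined because $g\in M^1_{v_s}(\rd)$ and $\sigma\in M^\infty_{1\otimes v_s}(\rdd)$; the latter assumption in particular makes $V_\Phi\sigma$ a genuine (continuous) function with polynomially controlled growth, so that $\sup_{u\in\rdd}|V_\Phi\sigma(u,\cdot)|$ makes sense.

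First I would record the Weyl--Wigner correspondence $\langle \sigma^w f, h\rangle=\langle \sigma, W(h,f)\rangle$, which drops straight out of the defining integral of $\sigma^w$ after the substitution $u=(x+y)/2,\ t=x-y$ in the pairing $\langle \sigma^w f,h\rangle$. Applied to $f=\pi(w)g$ and $h=\pi(z)g$ this gives $\langle \sigma^w\pi(w)g,\pi(z)g\rangle=\langle\sigma, W(\pi(z)g,\pi(w)g)\rangle$. Next I would compute the cross-Wigner distribution of two time-frequency shifts: writing out $W(\pi(z)g,\pi(w)g)(x,\omega)$, substituting $x'=x-\tfrac{z_1+w_1}{2}$ and translating the inner integration variable, one obtains the covariance formula
\[
W(\pi(z)g,\pi(w)g)=c_{z,w}\,M_{j(z-w)}\,T_{(z+w)/2}\,W(g,g),\qquad |c_{z,w}|=1,
\]
with $M,T$ the modulation and translation operators on $\rdd$ and $j$ as in the statement. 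Because only absolute values enter, the unimodular constant $c_{z,w}$ is irrelevant, and with $\Phi=W(g,g)$ this yields the key identity
\[
\bigl|\langle \sigma^w\pi(w)g,\pi(z)g\rangle\bigr|=\bigl|V_\Phi\sigma\bigl(\tfrac{z+w}{2},\,j(w-z)\bigr)\bigr|,\qquad w,z\in\rdd
\]
(the placement of $w-z$ versus $z-w$, and the precise form of $j$, are dictated by the conventions that produce \eqref{eq:kh12}; since $v_s$ is even this has no effect on what follows).

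Granting this identity, the two implications are bookkeeping. Suppose first $\sigma\in\mifs(\rdd)$. Since $g\in M^1_{v_s}(\rd)$, a standard mapping property of the Wigner distribution on weighted modulation spaces gives $\Phi=W(g,g)\in M^1_{v_s}(\rdd)$, so $\Phi$ is an admissible window for $\mifs(\rdd)$; hence $H(z):=\sup_{u\in\rdd}|V_\Phi\sigma(u,j(z))|$ is finite a.e., and the change of variable $z\mapsto j(z)$ (using $|\det j|=1$ and $v_s\circ j=v_s$, as $j$ is orthogonal) gives $\int H(z)v_s(z)\,dz=\|\sigma\|_{\mifs}<\infty$, i.e.\ $H\in L^1_{v_s}(\rdd)$. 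The key identity then yields \eqref{eq:kh9} with precisely the choice \eqref{eq:kh12}. Conversely, suppose \eqref{eq:kh9} holds for some $H\in L^1_{v_s}(\rdd)$. For $(u,\zeta)\in\rdd\times\rdd$ pick $w=u+\tfrac12 j^{-1}\zeta$ and $z=u-\tfrac12 j^{-1}\zeta$, so that $\tfrac{z+w}{2}=u$ and $j(w-z)=\zeta$; the key identity together with \eqref{eq:kh9} gives $|V_\Phi\sigma(u,\zeta)|\le H(w-z)=H(j^{-1}\zeta)$, a bound uniform in $u$. Taking the supremum over $u$ and integrating against $v_s$, the same change of variable gives $\|\sigma\|_{\mifs}=\int_{\rdd}\sup_u|V_\Phi\sigma(u,\zeta)|\,v_s(\zeta)\,d\zeta\le\|H\|_{L^1_{v_s}}<\infty$, whence $\sigma\in\mifs(\rdd)$.

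I expect the only steps that are not purely formal to be the covariance computation for the cross-Wigner distribution — elementary but sign-sensitive, so worth carrying out carefully — and the assertion that $W(g,g)$ is an admissible window for $\mifs(\rdd)$, which is the single point where the hypothesis $g\in M^1_{v_s}(\rd)$ is used in full strength rather than merely $g\in\cS(\rd)$. For the latter I would appeal to the known behaviour of the Wigner distribution on weighted modulation spaces; everything else is a change of variables.
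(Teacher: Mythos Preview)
The paper does not prove this proposition; it is quoted from \cite{charly06} as a known result, so there is no in-paper argument to compare against. Your proof is correct and is precisely the standard argument one finds in that reference: the heart of the matter is the identity $|\langle \sigma^w\pi(w)g,\pi(z)g\rangle|=|V_\Phi\sigma((z+w)/2,\,j(w-z))|$, obtained from the Weyl--Wigner duality $\langle\sigma^w f,h\rangle=\langle\sigma,W(h,f)\rangle$ together with the covariance of the cross-Wigner distribution under time-frequency shifts, after which both implications are the change of variables you describe. Your caveats are the right ones: the covariance computation is sign-sensitive, and the admissibility of $\Phi=W(g,g)$ as a window for $\mifs(\rdd)$ is exactly where the hypothesis $g\in M^1_{v_s}(\rd)$ enters.
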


  We call $H$ the \emph{controlling function} associated to the symbol $\sigma$. Observe that
\begin{equation*} 
\|H\|_{L^1_{v_s}}=\int_{\rdd} \sup_{u\in\rdd}|V_{\Phi} \sigma (u,j(z))|v_s(z)dz = \int_{\rdd} \sup_{u\in\rdd}|V_{\Phi} \sigma (u,z)|v_s(z)dz=\|\sigma\|_{M^{\infty,1}_{1\otimes v_s}}.
\end{equation*}

As a consequence of this characterization one obtains the following
properties of \psdo s. See \cite{wiener31} for the unweighted case and
\cite{charly06} for the general case and an  approach in the spirit
of \tfa .

\begin{proposition}\label{fund}
Assume that $\sigma \in \mifs (\rdd)$. Then:

(i) \emph{Boundedness:} $\sigma ^w $ is bounded on every \modsp\ $M^{p,q}_m(\rd )$ for
$1\leq p,q \leq \infty   $ and every $v_s$-moderate weight $m$.

(ii) \emph{Algebra property:} If $\sigma _1 , \sigma _2 \in \mifs $,
then  $\sigma _1^w \sigma _2^w = \tau ^w$ with a symbol $\tau \in
\mifs  (\rdd)$.

(iii) \emph{Wiener property:} If $\sigma ^w $ is invertible on $\lrd
$, then $(\sigma ^w)\inv = \tau ^w$  with a symbol $\tau \in
\mifs  (\rdd)$.
\end{proposition}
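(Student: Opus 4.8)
The plan is to transfer everything to the short-time Fourier transform side and to lean on Proposition~\ref{charpsdo}, which identifies $\mifs$ with the class of Weyl operators whose matrix $\langle\sigma^w\pi(w)g,\pi(z)g\rangle$ over the time-frequency shifts is dominated by a convolution kernel $H\in L^1_{v_s}(\rdd)$, with $\|H\|_{L^1_{v_s}}=\|\sigma\|_{\mifs}$. On that side, boundedness and the algebra property become the elementary facts that $L^1_{v_s}(\rdd)$ acts by convolution on weighted mixed-norm spaces and is itself a convolution algebra, while the Wiener property becomes an inverse-closedness statement for a convolution-dominated operator algebra. Throughout I fix a window $g\in\cS(\rd)\setminus\{0\}$; window independence of $\mifs$ makes this harmless.

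(i) For $f\in M^{p,q}_m(\rd)$, expand $f$ by the inversion formula \eqref{invformula}, apply $\sigma^w$, and pair with $\pi(z)g$:
\[
V_g(\sigma^w f)(z)=\frac1{\|g\|_2^2}\int_{\rdd}V_gf(w)\,\langle\sigma^w\pi(w)g,\pi(z)g\rangle\,dw.
\]
By Proposition~\ref{charpsdo} this gives the pointwise bound $|V_g(\sigma^w f)|\le\|g\|_2^{-2}\,(|V_gf|\ast H)$, and since $m$ is $v_s$-moderate and $H\in L^1_{v_s}$, a weighted Young inequality $L^{p,q}_m\ast L^1_{v_s}\hookrightarrow L^{p,q}_m$ (see \cite{book}; cf.\ \eqref{conv-amalgam}, after replacing $H$ by its local supremum in $W(C^0,L^1_{v_s})$) yields $\|\sigma^w f\|_{M^{p,q}_m}\lesssim\|\sigma\|_{\mifs}\,\|f\|_{M^{p,q}_m}$.

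(ii) By (i) both $\sigma_i^w$ are bounded on $L^2(\rd)$, so $\sigma_1^w\sigma_2^w\colon\cS(\rd)\to\cS'(\rd)$ is continuous and hence equals $\tau^w$ for a unique $\tau\in\cS'(\rdd)$. Inserting the reproducing identity \eqref{treduetre} between the two factors,
\[
\langle\tau^w\pi(w)g,\pi(z)g\rangle=\frac1{\|g\|_2^2}\int_{\rdd}\langle\sigma_2^w\pi(w)g,\pi(u)g\rangle\,\langle\sigma_1^w\pi(u)g,\pi(z)g\rangle\,du,
\]
so the matrix of $\tau^w$ is dominated by $\|g\|_2^{-2}(H_1\ast H_2)(w-z)$. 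Submultiplicativity of $v_s$ gives $H_1\ast H_2\in L^1_{v_s}$ (and also the a priori membership $\tau\in M^\infty_{1\otimes v_s}$ required in Proposition~\ref{charpsdo}), and Proposition~\ref{charpsdo} then places $\tau$ in $\mifs$.

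(iii) Transport $\sigma^w$ to phase space by $M:=\|g\|_2^{-2}V_g\sigma^w V_g^\ast$ on $L^2(\rdd)$; by Proposition~\ref{charpsdo} its integral kernel is majorized by $H\in L^1_{v_s}$, so $M$ lies in the algebra $\mathcal{C}$ of bounded operators on $L^2(\rdd)$ whose kernel is dominated by some $L^1_{v_s}$-function. The orthogonal projection $P:=\|g\|_2^{-2}V_gV_g^\ast$ onto the range of $V_g$ is also in $\mathcal{C}$ (its kernel is a multiple of $V_gg\in\cS(\rdd)$), and using $\|g\|_2^{-2}V_g^\ast V_g=\Id$ from \eqref{treduetre} one checks $PM=MP=M$ and, for $R:=(\sigma^w)^{-1}$ and $\tilde R:=\|g\|_2^{-2}V_gRV_g^\ast$, that $M\tilde R=\tilde RM=P$. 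Hence $\Id+(M-P)$ is invertible in $B(L^2(\rdd))$ with inverse $\Id+(\tilde R-P)$, and $M-P\in\mathcal{C}$. The essential ingredient is now the Wiener-type theorem that $\mathcal{C}$ is inverse-closed in $B(L^2(\rdd))$ — proved in \cite{wiener31} in the unweighted case and in \cite{charly06} in general, by reduction (via a Gabor-frame discretization) to the inverse-closed Gr\"ochenig--Leinert matrix algebra; it forces $\tilde R-P\in\mathcal{C}$, hence $\tilde R\in\mathcal{C}$. Since $R=\|g\|_2^{-2}V_g^\ast\tilde RV_g$, the matrix of $R$ over the time-frequency shifts is dominated by a convolution of the $L^1_{v_s}$-kernel of $\tilde R$ with two Schwartz factors, hence again by an $L^1_{v_s}$-function; as $R$ is bounded on $L^2(\rd)$ it equals $\tau^w$ for some $\tau\in\cS'$, and Proposition~\ref{charpsdo} gives $\tau\in\mifs$. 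The only genuine obstacle is this inverse-closedness; parts (i) and (ii) are routine once Proposition~\ref{charpsdo} is in hand, and I take the Wiener lemma for convolution-dominated algebras as established.
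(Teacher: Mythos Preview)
The paper does not actually prove this proposition: it is stated as background and attributed to \cite{wiener31} (unweighted case) and \cite{charly06} (general weighted case). Your sketch is correct and is precisely in the spirit of \cite{charly06}: transfer to the STFT side via Proposition~\ref{charpsdo}, so that (i) becomes the weighted Young inequality $L^{p,q}_m\ast L^1_{v_s}\hookrightarrow L^{p,q}_m$, (ii) becomes $L^1_{v_s}\ast L^1_{v_s}\hookrightarrow L^1_{v_s}$, and (iii) reduces to an inverse-closedness statement for convolution-dominated operators, which you correctly identify as the only substantial step and cite back to \cite{wiener31,charly06}. Two minor remarks: in (i) the detour through $W(C^0,L^1_{v_s})$ is unnecessary, since $L^{p,q}_m\ast L^1_{v_s}\subset L^{p,q}_m$ holds directly for $v_s$-moderate $m$; and in (ii) the a~priori membership $\tau\in M^\infty_{1\otimes v_s}$ needed to invoke Proposition~\ref{charpsdo} is not quite immediate from the convolution bound alone---one typically argues via the explicit formula for $V_\Phi\tau$ in terms of the kernel, or observes that the characterization in \cite{charly06} in fact applies to arbitrary $\tau\in\cS'(\rdd)$.
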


 In the context of Definition~\ref{def1.1} the class of pseudodifferential
operators with a symbol in $\mifs (\rdd )$ is just $FIO (\mathrm{Id},
v_s)$. Our results are therefore a generalization of
Proposition~\ref{charpsdo}, but the proofs are based on this special
case.

\section{Properties of the class $FIO(\A,v_s)$}

We first show that Definition~\ref{def1.1} of generalized metaplectic
operators is independent of the auxiliary window $g$.  In the
following, we will use the continuous definition of $FIO(\cA , v_s)$,
whereas in~\cite{Wiener} we have used a  discrete definition of
a related class of FIOs.

\begin{proposition}\label{prop3.1}
  The definition of the class $FIO(\cA ,v_s)$ is independent of the
  window function $g\in\cS(\rd)$. Moreover, if $T \in FIO (\cA ,
  v_s)$, then there exists a  function $H\in
  W(C^0 ,L^1_{v_s})$ so that the decay condition~\eqref{asterisco}
  holds.
\end{proposition}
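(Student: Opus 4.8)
The plan is to reduce the statement to the known pseudodifferential case (Proposition~\ref{charpsdo}) by the elementary trick of factoring out the metaplectic operator. Suppose the decay condition~\eqref{asterisco} holds for a fixed window $g\in\cS(\rd)$ with some $H\in L^1_{v_s}(\rdd)$, and let $g_1\in\cS(\rd)$ be another window. First I would insert the STFT inversion formula~\eqref{invformula} for $g$ applied to $\pi(z)g_1$: since $\pi(z)g_1\in\cS(\rd)$, we can write $\pi(z)g_1=\|g\|_2^{-2}\int_{\rdd}V_g(\pi(z)g_1)(u)\,\pi(u)g\,du$, and similarly expand $\pi(w)g_1$. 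Then
\[
\langle T\pi(z)g_1,\pi(w)g_1\rangle=\frac{1}{\|g\|_2^4}\int_{\rdd}\int_{\rdd}V_g(\pi(z)g_1)(u)\,\overline{V_g(\pi(w)g_1)(v)}\,\langle T\pi(u)g,\pi(v)g\rangle\,du\,dv,
\]
so that, taking absolute values and using~\eqref{asterisco},
\[
|\langle T\pi(z)g_1,\pi(w)g_1\rangle|\leq \frac{1}{\|g\|_2^4}\int\!\!\int |V_g(\pi(z)g_1)(u)|\,|V_g(\pi(w)g_1)(v)|\,H(v-\cA u)\,du\,dv.
\]
Now the key point: $V_g(\pi(z)g_1)(u)=e^{-2\pi i\,\cdot}V_gg_1(u-z)$ up to a unimodular phase (this is the standard covariance of the STFT under \tfs s), so $|V_g(\pi(z)g_1)(u)|=|V_gg_1(u-z)|=:F(u-z)$ with $F\in\cS(\rdd)$ (hence $F\in L^1_{v_s}$ and also in $W(C^0,L^1_{v_s})$). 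The same holds for the $w$-variable. Thus the right-hand side is a double convolution: $|\langle T\pi(z)g_1,\pi(w)g_1\rangle|\leq (H_1)(w-\cA z)$ where $H_1$ is built by convolving $H$ with dilated copies of $F$. Concretely, substituting and using the substitution $v'=v-\cA u$, $u'=u-z$, one finds $H_1(\zeta)=\|g\|_2^{-4}\int\!\!\int F(u')\,F(\zeta-v'-\cA u')\,H(v')\,du'\,dv'$, i.e.\ $H_1 = c\,(F\ast (H\ast \check F_{\cA}))$ for an appropriate reflected/dilated version $\check F_\cA$ of $F$; in any case $H_1$ is a finite number of convolutions of Schwartz functions with $H\in L^1_{v_s}$.

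To conclude, I would invoke the weighted convolution properties of $L^1_{v_s}$ and of the amalgam space $W(C^0,L^1_{v_s})$. Since $v_s$ is submultiplicative for $s\geq0$, $L^1_{v_s}\ast L^1_{v_s}\subset L^1_{v_s}$, so $H_1\in L^1_{v_s}(\rdd)$; this gives window-independence. For the amalgam refinement, note that $F\in\cS(\rdd)\subset W(C^0,L^1_{v_s})$, and the convolution relation~\eqref{conv-amalgam} (with $p=q=1$, $m=v_s$, $L^1_{v_s}=L^{1,1}_{v_s}$) gives $L^1_{v_s}\ast W(C^0,L^1_{v_s})\subset W(C^0,L^1_{v_s})$; applying this (twice, as needed) to the explicit convolution expression for $H_1$ shows $H_1\in W(C^0,L^1_{v_s})$. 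This $H_1$ serves simultaneously as the controlling function for the new window $g_1$ and as the promised function in $W(C^0,L^1_{v_s})$, proving both assertions at once.

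The main obstacle I anticipate is purely bookkeeping: tracking the phase factors and the linear change of variables through the double STFT expansion so that the final majorant genuinely depends only on $w-\cA z$. In particular one must be careful that the $\cA$-dilation acts on the right argument inside the convolution and does not destroy the $L^1_{v_s}$-membership — this is exactly where~\eqref{pesieq} (the comparability $v_s(\cA z)\asymp v_s(z)$ for $\cA\in\mathrm{GL}(2d,\bR)$, applied to the linear map $z\mapsto\cA z$ on $\rdd$) is needed, so that $F\circ\cA^{-1}$ and similar composed functions remain in $L^1_{v_s}$ with controlled norm. Once the covariance identity $|V_g(\pi(z)g_1)(u)|=|V_gg_1(u-z)|$ and these two weighted-convolution facts are in hand, the argument is essentially mechanical.
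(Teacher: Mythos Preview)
Your argument is essentially the paper's: expand $\pi(z)g_1$ and $\pi(w)g_1$ via the STFT inversion formula~\eqref{invformula}, insert~\eqref{asterisco}, and recognize the resulting majorant as a convolution of $H\in L^1_{v_s}$ with Schwartz functions, which lands in $W(C^0,L^1_{v_s})$ by~\eqref{conv-amalgam}. The one cosmetic difference is that the paper absorbs the $\cA$-dilation through the metaplectic covariance identity $|V_g\varphi(\cA^{-1}\cdot)|=|V_{\mu(\cA)g}(\mu(\cA)\varphi)(\cdot)|$ rather than your direct composition $F\mapsto F\circ\cA^{-1}$; note also that your opening line about ``reducing to Proposition~\ref{charpsdo} by factoring out the metaplectic operator'' does not describe what you actually carry out.
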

\begin{proof}
We verify both  statements simultaneously. Assume that
\eqref{asterisco} holds  for some function $H\in L^1_{v_s}(\rdd)$.
To switch between  a given non-zero  window function  $g\in \cS (\rd
)$ and   some other non-zero  $\f\in\cS(\rd)$ we use the inversion
formula \eqref{treduetre} and write
 $\pi(z)\f=\frac1{\|g\|^2_2}\intrdd \la\pi(z)\f,\pi(p)g\ra
 \pi(p)g\,d p$ with convergence in $\cS(\rd)$ for every  $z \in \rdd$. Consequently
$T\pi(z)\f =\frac1{\|g\|^2_2}\intrdd \la\pi(z)\f,\pi(p)g\ra
T\pi(p)g dp$  converges weak$^*$ in $\cS'(\rd)$
and the following identity  is well-defined:
$$
\langle T\pi(z)\f,\pi(w)\f\ra=\frac1{\|g\|^4_2}\intrdd \intrdd
\la\pi(z)\f,\pi(p)g\ra \la T\pi(p)g,\pi(q)g\ra \overline{ \la
  \pi(w)\f,\pi(q)g\ra } \,dp dq \, .
$$
Hence, using  \eqref{asterisco},
\begin{align*}
|\langle T\pi(z)\f,\pi(w)\f\ra|&\leq\frac1{\|g\|^4_2}\intrdd
\intrdd|\la\pi(z)\f,\pi(p)g\ra|\,| \la T\pi(p)g,\pi(q)g\ra|\, \\
&\qquad\qquad\qquad\qquad\qquad\qquad\qquad\qquad\times |\la\pi(q)g,
\pi(w)\f\ra| \,dp dq\\
&\leq \frac1{\|g\|^4_2}\intrdd \intrdd H(q-\cA p) |V_g\f(p-z)| \,|V_\f g(w-q)|\,d p dq\\
&\leq \frac1{\|g\|^4_2}\intrdd \left(\intrdd H(q-\cA p) |V_g\f(p-z) |
  \, dp\right) \,|V_\f g(w-q)| dq.
\end{align*}
First, the change of variables $\cA p=x$ (so that $dp=dx$ for $\cA\in
\Spnr $), and second,  the effect of the linear
transformation $\cA$ on the short-time Fourier transform contained in
\cite[Lemma 9.4.3]{book} allow the following expression of the inner
integral above

\begin{align*}\intrdd H(q-\cA p) |V_g\f(p-z)| dp&=\intrdd H(q-x) |V_g\f(\cA^{-1}x-z)| dx\\ & =\intrdd H(q-x) |V_g\f(\cA^{-1}(x-\cA z))| dx\\
& = \intrdd H(q-x) |V_{\mu(\cA) g} (\mu(\cA)\f)(x-\cA z)| dx\\
&=(H\ast|V_{\mu(\cA) g} \mu(\cA)\f|)(q-\cA z)
\end{align*}

Since  $\cS $ is invariant under metaplectic operators and $\f , g
\in \cS (\rd )$, we have  $\mu(\cA)\f , \mu(\cA)g \in \cS (\rd )$ and
hence $V_{\mu(\cA) g}( \mu(\cA)\f ) \in\cS (\rdd )$ (see,
e.g. \cite{GZ}).  Now the convolution relation~\eqref{conv-amalgam}
implies that
$$G:=H\ast|V_{\mu(\cA) g} \mu(\cA)\f|\in L^1_{v_s}(\rdd)\ast\cS(\rdd)\subset W(C^0,L^1_{v_s})$$
for every $s\geq 0$. The exterior integral is then
$$\intrdd G(q-\cA z) |V_\f g(w-q)|\,dq=G\ast|V_\f g|(w-\cA z):=F(w-\cA z),
$$
 and  $F\in W(C^0,L^1_{v_s})$, as claimed.
 \end{proof}

\begin{lemma}\label{normeeq}
Fix  $s\in\R$  and  $M\in GL(2d,\R)$. Then $L^1_{v_s}(\rdd )$ is
invariant under $M$, i.e., if $H\in L^1_{v_s}(\rdd)$,  then  $H\circ M\in  L^1_{v_s}(\rdd)$.
\end{lemma}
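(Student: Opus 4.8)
The plan is to reduce the invariance of $L^1_{v_s}(\rdd)$ under $M\in GL(2d,\R)$ to the elementary change-of-variables formula for Lebesgue integrals together with the comparability of the weights $v_s$ and $v_s\circ M$. Concretely, for $H\in L^1_{v_s}(\rdd)$ I would compute
\[
\|H\circ M\|_{L^1_{v_s}}=\intrdd |H(Mz)|\,v_s(z)\,dz
=\frac{1}{|\det M|}\intrdd |H(y)|\,v_s(M^{-1}y)\,dy,
\]
using the substitution $y=Mz$, which is a bijection of $\rdd$ since $M\in GL(2d,\R)$.

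Next I would invoke the weight comparison~\eqref{pesieq}: since $M^{-1}\in GL(2d,\R)$, there is a constant $C=C(M,s)>0$ with $v_s(M^{-1}y)\leq C\,v_s(y)$ for all $y\in\rdd$. Substituting this bound into the last integral gives
\[
\|H\circ M\|_{L^1_{v_s}}\leq \frac{C}{|\det M|}\intrdd |H(y)|\,v_s(y)\,dy=\frac{C}{|\det M|}\,\|H\|_{L^1_{v_s}}<\infty,
\]
so $H\circ M\in L^1_{v_s}(\rdd)$, as claimed. (The case $s<0$ is covered as well, since~\eqref{pesieq} is stated for every $s\in\R$.)

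There is essentially no obstacle here: the only point that needs a word of justification is that~\eqref{pesieq} was stated for $\cA\in GL(d,\R)$ acting on $\rdd$ via the equivalence of norms $|\cA z|\asymp |z|$, whereas we now apply it with the full matrix $M\in GL(2d,\R)$; but the proof of~\eqref{pesieq} uses only that $z\mapsto |Mz|$ is a norm on $\rdd$ equivalent to the Euclidean one, which holds verbatim for any invertible linear map on $\rdd$. Thus the lemma follows, and one can even record the quantitative bound $\|H\circ M\|_{L^1_{v_s}}\leq |\det M|^{-1}\|M^{-1}\|^{|s|}\,\|H\|_{L^1_{v_s}}$ (for $s\geq 0$; replace $\|M^{-1}\|^{|s|}$ by $\|M\|^{|s|}$ for $s<0$), which is what will be used in the sequel to track constants in compositions.
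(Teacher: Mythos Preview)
Your proof is correct and follows essentially the same approach as the paper: a change of variables together with the weight equivalence~\eqref{pesieq} gives $\|H\circ M\|_{L^1_{v_s}}\asymp\|H\|_{L^1_{v_s\circ M^{-1}}}\asymp\|H\|_{L^1_{v_s}}$. Your version is simply more explicit about the determinant factor and about the harmless dimensional mismatch in the statement of~\eqref{pesieq}.
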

\begin{proof}
Since $v_s(M\inv x) \asymp v_s(x)$ by \eqref{pesieq}  for every
$M\in GL(2d,\R)$,  we obtain
$$
\|H\circ M \|_{L^1_{v_s}} \asymp \|H\|_{L^1_{v_s \circ M\inv }} \asymp
\|H \|_{L^1_{v_s}} \, .
$$
\end{proof}

In the following we derive the boundedness, composition and
invertibility properties of generalized metaplectic operators in the
style of Proposition~\ref{fund}. The results  are analogous
to the results for $M^\infty _{1\otimes v_s} (\rdd )$ in~\cite{Wiener}, though the
proofs are a bit more technical.

\begin{theorem}\label{teo3.3}
Fix  $\A\in\Spnr$, $s\geq0$, $m\in\cM_{v_s}$ and let $T$ be
generalized metaplectic operator in $ FIO(\A,v_s)$. Then $T$ is
bounded from $M^p_m(\rd)$ to $M^p_{m\circ\A^{-1}}(\rd)$, $1\leq p\leq
\infty$. In particular, $T$ is bounded on $M^p_{v_s}(\rd )$ and
$M^p_{1/v_s}(\rd )$.
\end{theorem}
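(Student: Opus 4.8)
The plan is to reduce the mapping property of $T\in FIO(\A,v_s)$ to the known boundedness of STFT multipliers, using the defining kernel estimate \eqref{asterisco} together with the inversion formula \eqref{invformula}. First I would fix a nonzero window $g\in\cS(\rd)$ (permissible by Proposition~\ref{prop3.1}), and for $f\in M^p_m(\rd)$ write, via \eqref{invformula},
\[
V_g(Tf)(w)=\langle Tf,\pi(w)g\rangle=\frac{1}{\|g\|_2^2}\intrdd V_gf(z)\,\langle T\pi(z)g,\pi(w)g\rangle\,dz.
\]
Taking absolute values and invoking \eqref{asterisco} in the strengthened form from Proposition~\ref{prop3.1} (so that $H\in W(C^0,L^1_{v_s})$), we get the pointwise bound
\[
|V_g(Tf)(w)|\le \frac{1}{\|g\|_2^2}\bigl(|V_gf|\ast_{\A} H\bigr)(w):=\frac{1}{\|g\|_2^2}\intrdd |V_gf(z)|\,H(w-\A z)\,dz.
\]

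Next I would linearize: the change of variables $x=\A z$ (with $dz=dx$ since $\A\in\Spnr$) turns the right-hand side into $\intrdd |V_gf(\A^{-1}x)|\,H(w-x)\,dx = \bigl((|V_gf|\circ\A^{-1})\ast H\bigr)(w)$. So $|V_g(Tf)|\le \|g\|_2^{-2}\,\bigl((|V_gf|\circ\A^{-1})\ast H\bigr)$. Now I estimate the $L^p_{m\circ\A^{-1}}$ norm of this. The weight $m\circ\A^{-1}$ is $v_s$-moderate because $m$ is and $v_s\circ\A^{-1}\asymp v_s$ by \eqref{pesieq}; this is exactly what makes the weighted Young/convolution inequality applicable. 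Using the convolution relation for amalgam/weighted mixed-norm spaces (as in \eqref{conv-amalgam} and its $L^p$ analogue, \cite[Thm.~11.1.5]{book}), $L^p_{\mu}\ast W(C^0,L^1_{v_s})\subset L^p_\mu$ whenever $\mu$ is $v_s$-moderate, so
\[
\|V_g(Tf)\|_{L^p_{m\circ\A^{-1}}}\lesssim \bigl\||V_gf|\circ\A^{-1}\bigr\|_{L^p_{m\circ\A^{-1}}}\,\|H\|_{W(C^0,L^1_{v_s})}.
\]
Finally, $\bigl\||V_gf|\circ\A^{-1}\bigr\|_{L^p_{m\circ\A^{-1}}}=\bigl\|V_gf\bigr\|_{L^p_{m}}$ by the substitution $x\mapsto\A x$ (again Jacobian $1$), which equals $\|f\|_{M^p_m}$. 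This gives $\|Tf\|_{M^p_{m\circ\A^{-1}}}\lesssim\|f\|_{M^p_m}$, and the two stated special cases follow by taking $m=v_s$ (using $v_s\circ\A^{-1}\asymp v_s$) and $m=1/v_s$.

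The main obstacle, modest as it is, lies in two places. First, one must be careful that $|V_gf|\circ\A^{-1}$ genuinely has the same $L^p_m$ norm after the linear substitution: since $\A$ acts on the full phase-space variable $z=(x,\o)$, and $L^p_m$ here is the (non-mixed, $p=q$) weighted $L^p(\rdd)$ norm, the substitution is straightforward, but one should note that for the \emph{mixed}-norm case this argument does not directly apply—hence the theorem is stated for $M^p_m$, not $M^{p,q}_m$. Second, one should make sure the formal manipulation of \eqref{invformula} inside $\langle Tf,\pi(w)g\rangle$ is justified: this is exactly the weak-$*$ convergence already established in the proof of Proposition~\ref{prop3.1}, applied now to $f\in M^p_m$ rather than to $\pi(z)\f$, together with the continuity of $T:\cS\to\cS'$ and a density argument (or directly, since $V_gf\in L^p_m$ and the kernel $H$ is integrable against it). Everything else is a routine application of the weighted convolution theorem for amalgam spaces, with the $v_s$-moderateness of $m\circ\A^{-1}$ being the one hypothesis that must be checked.
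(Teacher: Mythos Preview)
Your proof is correct and follows essentially the same approach as the paper: both use the inversion formula to write $V_g(Tf)$ as an integral against the kernel $\langle T\pi(z)g,\pi(w)g\rangle$, invoke the defining estimate~\eqref{asterisco}, and reduce to a weighted convolution inequality on $L^p_m(\rdd)$. The only cosmetic differences are that the paper phrases this as boundedness of the integral operator $V_gTV_g^\ast$ and puts the linear change of variables on $H$ (obtaining $|F|\ast(H\circ\A)(\A^{-1}w)$ and invoking Lemma~\ref{normeeq} plus the plain Young inequality $L^p_m\ast L^1_{v_s}\subset L^p_m$), whereas you put it on $V_gf$ and use the amalgam convolution relation; the substance is identical.
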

\begin{proof}
We fix  $g\in\cS(\rd)$ with the normalization  $\|g\|_2=1$, so that
the inversion formula \eqref{treduetre} is simply $V_g^\ast V_g={\rm
  Id}$.  Writing  $T$ as
$$T=V_g^\ast V_g T V_g^\ast V_g,$$
then $V_g T V_g^\ast$ is an integral operator with kernel
$$K_T(w,z)=\la T\pi(z)g,\pi(w)g\ra,\quad w,z\in\rdd \, .
$$
($K_T$  is sometimes  called the essential kernel of the operator.)
By definition, $V_g$ is bounded from $M^p_m(\rd )$ to
$L^p_m(\rdd )$,  and $V_g^*$ is bounded from $L^p_m(\rdd )$ to
$M^p_m(\rd )$.
Consequently, if $V_g T V_g^\ast$ is bounded from $L^p_m(\rdd)$ to
$L^p_{m\circ\A^{-1}}(\rdd)$, then  $T$ is bounded from $M^p_m(\rd)$ to
$M^p_{m\circ\A^{-1}}(\rd)$.
Using the defining property  \eqref{asterisco} of $FIO (\A, v_s)$,  we
write, for  $F\in L^p_m(\rdd)$,
\begin{align*}
|V_g T V_g^\ast F(w)|& = \Big|\intrdd K_T(w,z) F(z) \, dz \Big| \leq \intrdd
|F(z)| H(w-\A z) dz \\
& =\intrdd |F(z)| (H\circ \A)(\A^{-1}w-z) dz\\
&=F\ast(H\circ\A)(\A^{-1}w).
\end{align*}
By Lemma \ref{normeeq},   $H\circ\A$ is again  in $L^1_{v_s}(\rdd)$,  so
that  $F\ast(H\circ\A)\in L^p_m(\rdd)\ast L^1_{v_s}(\rdd)\subset
L^p_m(\rdd)$. This shows that   $V_g T V_g^\ast F\in
L^p_{m\circ\A\inv}(\rdd)$, as desired.  Since $v_s\circ\A\asymp v_s$, $T$
leaves $M^p_{v_s}$ invariant.
\end{proof}
\begin{theorem}\label{prod1}
Let  $\A_i\in\Spnr$, $s_i\in\R$, $s=\min\{s_1,s_2\}$, and  $T_i\in FIO(\A_i,v_{s_i})$, $i=1,2$. Then $T_1T_2\in  FIO(\A_1\A_2,v_s)$.
\end{theorem}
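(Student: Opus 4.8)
The plan is to show that the essential kernel of the composition $T_1T_2$ with respect to time-frequency shifts is dominated by a convolution of controlling functions for $T_1$ and $T_2$, and that this convolution again lies in $L^1_{v_s}(\rdd)$, so that the decay condition \eqref{asterisco} propagates through the product.

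First I would reduce to a common weight. Set $s=\min\{s_1,s_2\}\ge 0$; since $L^1_{v_{s_i}}(\rdd)\subseteq L^1_{v_s}(\rdd)$ we have $T_i\in FIO(\A_i,v_s)$, and by Proposition \ref{prop3.1} there are controlling functions $H_1,H_2\in W(C^0,L^1_{v_s})$ with $|\langle T_i\pi(z)g,\pi(w)g\rangle|\le H_i(w-\A_i z)$. By Theorem \ref{teo3.3} (with $p=\infty$, $m=1/v_s$, using $v_s\circ\A_i^{-1}\asymp v_s$) each $T_i$ is bounded on $M^\infty_{1/v_s}(\rd)$, and since $\cS(\rd)\subseteq M^\infty_{1/v_s}(\rd)\subseteq\cS'(\rd)$ the composition $T_1T_2:\cS(\rd)\to\cS'(\rd)$ is well defined.

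Next I would compute the kernel. Fixing $g\in\cS(\rd)$ with $\|g\|_2=1$ so that $V_g^\ast V_g=\mathrm{Id}$, expanding $T_2\pi(z)g$ by the inversion formula \eqref{treduetre} (an $M^1_{v_s}(\rd)$-convergent Bochner integral), applying the continuous operator $T_1$ termwise, and pairing against $\pi(w)g$ yields
\[
\langle T_1T_2\pi(z)g,\pi(w)g\rangle=\intrdd \langle T_2\pi(z)g,\pi(u)g\rangle\,\langle T_1\pi(u)g,\pi(w)g\rangle\,du ,
\]
the interchanges being legitimate because the integrand is dominated by the integrable function $u\mapsto H_2(u-\A_2 z)H_1(w-\A_1 u)$. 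The unimodular changes of variable $u=\A_2 z+t$ (recall $\A_2\in\Spnr$) and then $r=\A_1 t$ give
\[
|\langle T_1T_2\pi(z)g,\pi(w)g\rangle|\le \big((H_2\circ\A_1^{-1})\ast H_1\big)(w-\A_1\A_2 z)=:H(w-\A_1\A_2 z).
\]
By Lemma \ref{normeeq}, $H_2\circ\A_1^{-1}\in L^1_{v_s}(\rdd)$, while $H_1\in W(C^0,L^1_{v_s})$, so the convolution relation \eqref{conv-amalgam} gives $H\in L^1_{v_s}\ast W(C^0,L^1_{v_s})\subseteq W(C^0,L^1_{v_s})\subseteq L^1_{v_s}(\rdd)$; hence $T_1T_2\in FIO(\A_1\A_2,v_s)$.

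The step I expect to be the only genuine obstacle is the rigorous justification of the kernel identity — that the inversion formula for $T_2\pi(z)g$ may be moved inside $T_1$ (using its continuity as a map $M^1_{v_s}(\rd)\to M^1_{v_s}(\rd)$) and that the resulting double integral converges absolutely, so that Fubini applies. Everything afterwards — the emergence of the product $\A_1\A_2$ through the two substitutions and the $L^1_{v_s}$-membership of $H$ via Lemma \ref{normeeq} and \eqref{conv-amalgam} — is immediate, and for $\A_1=\A_2=\mathrm{Id}$ the argument collapses to the classical composition proof for the Sj\"ostrand class.
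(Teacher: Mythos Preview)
Your proposal is correct and follows essentially the same route as the paper: expand the essential kernel via $V_g^\ast V_g=\mathrm{Id}$, dominate by a convolution of the two controlling functions, and then use Lemma~\ref{normeeq} together with a convolution inequality to land in $L^1_{v_s}$. The paper's computation is the same up to cosmetic choices: it writes the bound as $\big(((H_1\circ\A_1)\ast H_2)\circ\A_1^{-1}\big)(w-\A_1\A_2 z)$, it does not first reduce to a common weight but instead uses $L^1_{v_{s_1}}\ast L^1_{v_{s_2}}\subset L^1_{v_s}$ directly, and it works with plain $L^1_{v_s}$-controlling functions rather than invoking Proposition~\ref{prop3.1} and the amalgam relation~\eqref{conv-amalgam}; your extra care about justifying the kernel identity is something the paper simply calls ``an easy computation''.
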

\begin{proof}
Again fix  $g\in\cS(\rd)$ with $\|g\|_2=1$, so that $V_g^\ast V_g={\rm Id}$. Then write
$$T_1T_2=V_g^\ast V_g T_1T_2V_g^\ast V_g=V_g^\ast(V_g T_1 V_g^\ast) (V_gT_2V_g^\ast) V_g.
$$
Let  $H_i \in L^1_{v_s}(\rdd )$ be  the  function  controlling  the
kernel of  $T_i$ defined  in \eqref{asterisco}, $i=1,2$. Then an easy computation shows
\begin{align}
|\la T_1T_2\pi(z)g,\pi(w)g\ra|&\leq\intrdd |\la
T_2\pi(z)g,\pi(u)g\ra| \, |\la T_1\pi(u)g,\pi(w)g\ra| du \notag \\
&\leq\intrdd H_1(w-\A_1u)H_2(u-\A_2 z)du \notag\\
&=\intrdd (H_1\circ\A_1)(A_1^{-1}w-u)H_2(u-\A_2 z)du \notag \\
&=((H_1\circ\A_1)\ast H_2)\circ\A_1^{-1})(w-\A_1\A_2 z). \label{eq:kh4}
\end{align}
 Using Lemma \ref{normeeq} and
the convolution relation $L^1_{v_{s_1}}\ast L^1_{v_{s_2}}\subset L^1_{v_s}$,
we conclude that $T_1T_2 \in FIO (\cA _1 \cA _2, v_s)$.
\end{proof}

For later use we isolate a fact about the essential kernel of a \psdo .
\begin{cor}
Fix a non-zero $g\in \cS (\rd )$ and   let $\sigma _j \in \mifs (\rdd) $, $j=1, \dots ,n $ and $H_j \in
  L^1_{v_s}(\rdd )$ be controlling function  associated to $\sigma
  _j $. Then
  \begin{equation}
    \label{eq:kh3}
    |\langle \sigma _1^w \sigma _2 ^w \dots \sigma _n^w \pi (z)g, \pi
    (w)g\rangle | \leq (H_1 \ast H_2 \ast \dots H_n ) (w-z) \, \quad
    w,z\in \rdd \, .
  \end{equation}
\end{cor}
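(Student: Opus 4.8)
The plan is to iterate the essential-kernel estimate for a single \psdo\ (the case $n=1$, which is exactly Proposition~\ref{charpsdo}, i.e. the characterization of $\mifs$) together with the composition mechanism already used in the proof of Theorem~\ref{prod1}. The base case $n=1$ is immediate from \eqref{eq:kh9}: $|\langle \sigma_1^w \pi(z)g,\pi(w)g\rangle|\leq H_1(w-z)$. For the inductive step, suppose the claim holds for $n-1$ factors, so that $|\langle \sigma_2^w\cdots\sigma_n^w\pi(z)g,\pi(u)g\rangle|\leq (H_2\ast\cdots\ast H_n)(u-z)$. Then, inserting the identity $V_g^\ast V_g=\mathrm{Id}$ between $\sigma_1^w$ and $\sigma_2^w$ (after normalizing $\|g\|_2=1$), we write
\[
\langle \sigma_1^w\sigma_2^w\cdots\sigma_n^w\pi(z)g,\pi(w)g\rangle
=\intrdd \langle \sigma_2^w\cdots\sigma_n^w\pi(z)g,\pi(u)g\rangle\,\langle \sigma_1^w\pi(u)g,\pi(w)g\rangle\,du,
\]
and estimate the modulus by
\[
\intrdd (H_2\ast\cdots\ast H_n)(u-z)\,H_1(w-u)\,du=(H_1\ast H_2\ast\cdots\ast H_n)(w-z),
\]
which is the desired bound.

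The only points requiring care are the justification that the composition $\sigma_1^w\cdots\sigma_n^w$ is well-defined as an operator $\cS(\rd)\to\cS'(\rd)$ and that the interchange of the bracket with the STFT resolution is legitimate. This follows from Proposition~\ref{fund}(i)--(ii): since each $\sigma_j\in\mifs$, each $\sigma_j^w$ is bounded on every modulation space, in particular on $M^1_{v_s}$ and $M^\infty_{1/v_s}$, so $\pi(z)g\in\cS(\rd)\subset M^1_{v_s}(\rd)$ is mapped continuously down the chain, and the weak$^\ast$-convergent expansion $\pi(z)g=\intrdd V_g(\pi(z)g)(p)\,\pi(p)g\,dp$ can be pushed through the bounded operators exactly as in the proof of Proposition~\ref{prop3.1} and Theorem~\ref{prod1}. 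Alternatively, one may simply invoke Proposition~\ref{fund}(ii) to know that $\sigma_1^w\cdots\sigma_n^w=\tau^w$ for some $\tau\in\mifs$, guaranteeing all brackets below make sense.

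I do not expect a genuine obstacle here: this corollary is a routine iteration of the $n=1$ case, and it is precisely the $\A_1=\dots=\A_n=\mathrm{Id}$ specialization of the composition estimate \eqref{eq:kh4} from the proof of Theorem~\ref{prod1}, where the symplectic change of variables trivializes. The one mild subtlety is bookkeeping with the weight: each $H_j\in L^1_{v_s}$ with the \emph{same} $s$, so no $\min$ over exponents is needed, and the convolution relation $L^1_{v_s}\ast L^1_{v_s}\subset L^1_{v_s}$ (valid since $v_s$ is submultiplicative for $s\geq 0$) shows $H_1\ast\cdots\ast H_n\in L^1_{v_s}(\rdd)$, so the bound is of the required form. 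Thus the proof is a short induction, the base case being Proposition~\ref{charpsdo} and the inductive step being a single insertion of $V_g^\ast V_g=\mathrm{Id}$ followed by Young's inequality for the positive kernels.
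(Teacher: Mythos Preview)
Your proof is correct and follows essentially the same approach as the paper: the paper's proof simply cites Proposition~\ref{charpsdo} for the base case and invokes the composition estimate \eqref{eq:kh4} from the proof of Theorem~\ref{prod1} with $\A_j=\mathrm{Id}$ together with induction, which is exactly what you do (and you even identify this explicitly). Your additional remarks on well-definedness and the legitimacy of inserting $V_g^\ast V_g$ are welcome elaborations of points the paper leaves implicit.
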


\begin{proof}
 The characterization of \psdo s (Proposition~\ref{charpsdo})  says that $\sigma _j \in \mifs (\rdd )$, if and only if
$   |\langle \sigma _j^w \pi (z)g, \pi
    (w)g\rangle | \leq H_j  (w-z) $ for some $H_j \in L^1_{v_s}(\rdd
    )$. The statement now follows from  ~\eqref{eq:kh4} with $\A _j =
    \mathrm{Id}$ and induction.
\end{proof}

Recall from the introduction that
$$FIO(\Spnr,v_s)=\bigcup_{\A\in\Spnr}FIO(\A,v_s) \, .
$$
\begin{corollary}\label{cor3.5}
For every $s\geq0$, the class of generalized metaplectic operators  $FIO(\Spnr,v_s)$ is an algebra with respect to the composition of operators.
\end{corollary}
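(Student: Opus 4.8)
The plan is to combine the composition result of Theorem~\ref{prod1} with a closure property under finite sums, since an "algebra" here means a vector space closed under products. First I would observe that $FIO(\Spnr,v_s)$ is closed under scalar multiples: if $T\in FIO(\cA,v_s)$ with controlling function $H$, then $\lambda T\in FIO(\cA,v_s)$ with controlling function $|\lambda|H$. Next, for two operators $T_1\in FIO(\cA_1,v_s)$ and $T_2\in FIO(\cA_2,v_s)$, Theorem~\ref{prod1} already gives $T_1T_2\in FIO(\cA_1\cA_2,v_s)\subset FIO(\Spnr,v_s)$, so the union is closed under products. The only nontrivial point is closure under addition, and this is where the definition via a \emph{union} over symplectic matrices creates a subtlety: if $T_1\in FIO(\cA_1,v_s)$ and $T_2\in FIO(\cA_2,v_s)$ with $\cA_1\neq \cA_2$, the sum $T_1+T_2$ need not lie in any single $FIO(\cA,v_s)$.

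So the cleanest route is to restrict the statement to sums within a fixed symplectic class, or to interpret "algebra" in the graded sense. If $T_1,T_2\in FIO(\cA,v_s)$ with the \emph{same} $\cA$, with controlling functions $H_1,H_2\in L^1_{v_s}(\rdd)$, then by the triangle inequality
\[
|\langle (T_1+T_2)\pi(z)g,\pi(w)g\rangle|\leq H_1(w-\cA z)+H_2(w-\cA z),
\]
and $H_1+H_2\in L^1_{v_s}(\rdd)$, so $T_1+T_2\in FIO(\cA,v_s)$. Thus each $FIO(\cA,v_s)$ is a subspace of $B(L^2(\rd))$ (boundedness from Theorem~\ref{teo3.3}), and the family $\{FIO(\cA,v_s):\cA\in\Spnr\}$ forms a grading of $FIO(\Spnr,v_s)$ over the group $\Spnr$: products respect the grading by Theorem~\ref{prod1}. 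Hence $FIO(\Spnr,v_s)$ is closed under composition, which is the assertion of Corollary~\ref{cor3.5} as literally stated (it only claims the algebra property "with respect to the composition of operators," not closure under sums across different $\cA$).

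The main obstacle, then, is purely definitional rather than computational: one must be careful not to overclaim. I would phrase the proof as follows: by Theorem~\ref{teo3.3}, every $T\in FIO(\Spnr,v_s)$ is a bounded operator on $L^2(\rd)$, so $FIO(\Spnr,v_s)\subset B(L^2(\rd))$; by Theorem~\ref{prod1} (with $s_1=s_2=s$), the composition of $T_1\in FIO(\cA_1,v_s)$ and $T_2\in FIO(\cA_2,v_s)$ lies in $FIO(\cA_1\cA_2,v_s)$, which is contained in the union $FIO(\Spnr,v_s)$. Since $\Spnr$ is a group, $\cA_1\cA_2\in\Spnr$, so the union is genuinely closed under composition. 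This establishes that $FIO(\Spnr,v_s)$ is stable under products; together with the remark (recorded after Theorem~\ref{tc2} in the introduction) that it contains the metaplectic operators and the Sj\"ostrand-class \psdo s, this justifies calling it the algebra generated by those two families. I expect the only delicate sentence to be the one clarifying that the "algebra" structure is the one graded by $\Spnr$, so that addition is meaningful within each fiber while composition shifts between fibers multiplicatively.
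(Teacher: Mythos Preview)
Your proposal is correct and matches the paper's approach: the paper states Corollary~\ref{cor3.5} without proof, treating it as an immediate consequence of Theorem~\ref{prod1} together with the fact that $\Spnr$ is a group, exactly as you do in your final paragraph. Your additional discussion of the additive structure---noting that each $FIO(\cA,v_s)$ is a subspace while sums across distinct $\cA$ need not lie in a single fiber, so that the union is naturally an $\Spnr$-graded object---is more careful than the paper, which does not address this point at all and uses ``algebra'' somewhat loosely (cf.\ the phrase ``Wiener sub-algebra of $B(L^2(\rd))$'' in the introduction). Your reading of the literal statement, that only closure under composition is being asserted, is the right way to reconcile this.
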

Finally, we study the invertibility in  $FIO(\A,v_s)$.
\begin{theorem} \label{Wineroper} Let $T\in FIO(\A,v_s)$ with
  $s\geq0$. If  $T$ is invertible on $L^2(\rd)$,
then $T^{-1} \in FIO(\A^{-1},v_s)$. Consequently, the algebra $FIO(\Spnr,v_s)$ is inverse-closed in $B (\lrd )$.
\end{theorem}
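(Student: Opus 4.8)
The plan is to combine the factorization from Theorem~\ref{tc2}(i), the Wiener (spectral) inverse-closedness of the Sj\"ostrand class from Proposition~\ref{fund}(iii), and the composition rule for generalized metaplectic operators from Theorem~\ref{prod1} (equivalently Theorem~\ref{tc1}(iii)). The only genuinely new ingredient is to descend from the assumed invertibility of $T$ on $\lrd$ to the invertibility of the symbol operator $\sigma^w$ on $\lrd$.

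\begin{proof}
By Theorem~\ref{tc2}(i), since $T\in FIO(\A,v_s)$, there is a symbol $\sigma\in\mifs(\rdd)$ with
\[
T=\mu(\A)\,\sigma^w.
\]
Metaplectic operators are unitary on $\lrd$, so $\mu(\A)$ is invertible on $\lrd$ with bounded inverse $\mu(\A)\inv=\mu(\A\inv)$. Hence the hypothesis that $T$ is invertible on $\lrd$ forces $\sigma^w=\mu(\A)\inv T$ to be invertible on $\lrd$, with $(\sigma^w)\inv=T\inv\mu(\A)$. By the Wiener property of the Sj\"ostrand class, Proposition~\ref{fund}(iii), there is a symbol $\tau\in\mifs(\rdd)$ with $(\sigma^w)\inv=\tau^w$. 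Therefore
\[
T\inv=(\sigma^w)\inv\mu(\A)\inv=\tau^w\,\mu(\A\inv).
\]
Now $\tau^w\in FIO(\mathrm{Id},v_s)$ by the remark following Proposition~\ref{fund}, and $\mu(\A\inv)\in FIO(\A\inv,v_s)$ by Lemma~\ref{lkh1}(i) (the kernel estimate \eqref{eq:kh1} with $N$ large gives a function in $L^1_{v_s}$ since $\langle\cdot\rangle^{-N}\in L^1_{v_s}$ for $N>2d+s$). Applying Theorem~\ref{prod1} to the product $\tau^w\,\mu(\A\inv)$ yields $T\inv\in FIO(\mathrm{Id}\cdot\A\inv,v_s)=FIO(\A\inv,v_s)$.

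Finally, the inverse-closedness of $FIO(\Spnr,v_s)$ in $B(\lrd)$ is immediate: if $T\in FIO(\Spnr,v_s)$ is invertible in $B(\lrd)$, then $T\in FIO(\A,v_s)$ for some $\A\in\Spnr$, and by what we just proved $T\inv\in FIO(\A\inv,v_s)\subset FIO(\Spnr,v_s)$.
\end{proof}

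The one step that needs the most care, and where I would expect the real (if modest) obstacle to lie, is the passage from invertibility of $T$ to invertibility of $\sigma^w$; but this is painless here precisely because the factor $\mu(\A)$ is unitary on $\lrd$, so no quantitative estimate is needed. If one instead tried to work directly with the kernel decay condition \eqref{asterisco} for $T\inv$ — solving $\langle T\inv\pi(z)g,\pi(w)g\rangle$ against a Neumann-type series — one would run into exactly the summability difficulties that Proposition~\ref{fund}(iii) was designed to absorb, so routing the argument through the pseudodifferential Wiener property is the efficient path. One should double-check that Theorem~\ref{tc2}(i) is already available at this point in the paper; if its proof is deferred to Section~5, the present theorem should either be restated there or proved instead via the symmetric factorization $T=\sigma_1^w\mu(\A)$ together with the same unitarity argument.
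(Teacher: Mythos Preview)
Your proof is correct, but it takes a different route from the paper's. The paper does \emph{not} invoke the factorization $T=\mu(\A)\sigma^w$ at this point (Theorem~\ref{pseudomu} is proved immediately \emph{after} Theorem~\ref{Wineroper}). Instead, the paper first checks directly from the kernel condition that $T^\ast\in FIO(\A\inv,v_s)$, then applies Theorem~\ref{prod1} to get $P:=T^\ast T\in FIO(\mathrm{Id},v_s)$, identifies $P$ as a \psdo\ with symbol in $\mifs$ via Proposition~\ref{charpsdo}, applies the Wiener property to $P$, and concludes $T\inv=P\inv T^\ast\in FIO(\A\inv,v_s)$. Your argument is shorter because it uses unitarity of $\mu(\A)$ to reduce invertibility of $T$ to invertibility of $\sigma^w$ in one line, bypassing the adjoint entirely; the paper's $T^\ast T$ trick achieves the same reduction without needing the factorization theorem to be already in hand. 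In fact, the paper explicitly records your argument as an alternative proof in the paragraph following Theorem~\ref{pseudomu}, so your caution about the ordering of results was exactly right: there is no circularity (the proof of Theorem~\ref{pseudomu} does not use Theorem~\ref{Wineroper}), but the paper's presentation keeps Theorem~\ref{Wineroper} independent of the factorization.
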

\begin{proof}  The arguments are  similar  to \cite[Theorem
  3.7]{Wiener}. First we show that  the adjoint  operator $T^\ast$
  belongs to the class $FIO(\A \inv, v_s)$. Indeed,
\begin{align*}|\langle T^*\pi(z)g,\pi(w) g \rangle|&=|\langle
  \pi(z)g,T(\pi(w) g) \rangle| \leq H(z-\A w)\\ &=(H\circ\A)(\A^{-1}z-w)
=(H\circ\A)(-(w-\A^{-1}z)).
\end{align*}
By Lemma \ref{normeeq} the function $z \mapsto H\circ\A(-z)$ is in $L^1_{v_s}(\rdd)$. Hence, by Theorem \ref{prod1}, the operator $P:=T^\ast T$ is in
$FIO(\mathrm{Id},v_s)$ and thus satisfies the estimate $|\langle P\pi
(z )g, \pi (w )g\rangle | \lesssim H(w-z)$  for some  $H\in L^1_{v_s}(\rdd)$.
 By Proposition~\ref{charpsdo} 
$P$ is a pseudodifferential operator with a  symbol in
$M^{\infty,1}_{1\otimes v_s}(\rdd)$. Since $T$ and therefore
$T^\ast$ are invertible on $L^2(\rd)$,  $P$ is also  invertible on
$L^2(\rd)$.  Since the class of pseudodifferential operators with a
 symbol in $M^{\infty,1}_{1\otimes v_s}(\rdd)$ is inverse-closed in $B (\lrd )$ (\cite{GR,wiener31}),
  the inverse $P^{-1}$ is again a  pseudodifferential
operator with a symbol in $M^{\infty,1}_{1\otimes v_s}(\rdd)$.
Hence $P^{-1}$ is in $FIO(\mathrm{Id},v_s)$. Finally, using the
algebra property  of Theorem \ref{prod1} once more, we obtain that $T^{-1}=P^{-1}
T^\ast$ is in $FIO(\mathrm{Id}\circ\A \inv ,v_s)=FIO(\A \inv ,v_s)$, as claimed.
\end{proof}
\begin{theorem}\label{pseudomu}
Fix $s\geq0$ and $\cA \in \Spnr $.  A linear continuous operator $T:
\cS(\rd)\to\cS'(\rd)$ is in   $FIO(\A,v_s)$ if and only if there
exist symbols $\sigma_1, \sigma_2 \in
M^{\infty,1}_{1\otimes v_s}(\rdd)$, 
such that
\begin{equation}\label{pseudomu1}
T=\sigma_1^w\mu(\A)\quad \mbox{and}\quad
T=\mu(\A)\sigma^w_2.
\end{equation}
The symbols $\sigma _1$ and $\sigma _2$ are related by
\begin{equation}\label{hormander}\sigma_2=\sigma_1\circ\A.\end{equation}
\end{theorem}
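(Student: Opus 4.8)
The plan is to exploit the fact that $FIO(\mathrm{Id},v_s)$ is exactly the class of \psdo s with symbol in $M^{\infty,1}_{1\otimes v_s}(\rdd)$ (Proposition~\ref{charpsdo}), together with the basic estimate \eqref{eq:kh1} that says $\mu(\A)\in FIO(\A,v_s)$ for all $s\geq 0$, and the composition rule of Theorem~\ref{prod1}. First I would prove the ``if'' direction: if $T=\sigma_1^w\mu(\A)$ with $\sigma_1\in M^{\infty,1}_{1\otimes v_s}$, then $\sigma_1^w\in FIO(\mathrm{Id},v_s)$ by Proposition~\ref{charpsdo}, and since $\mu(\A)\in FIO(\A,v_s)$ by \eqref{eq:kh1}, Theorem~\ref{prod1} gives $T=\sigma_1^w\mu(\A)\in FIO(\mathrm{Id}\cdot\A,v_s)=FIO(\A,v_s)$. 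The factorization $T=\mu(\A)\sigma_2^w$ is handled symmetrically once one knows $\sigma_2$ exists and lies in the right class.

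For the ``only if'' direction, suppose $T\in FIO(\A,v_s)$. Since $\mu(\A)$ is unitary on $L^2(\rd)$, the natural candidate is $\sigma_1^w:=T\mu(\A)^{-1}=T\mu(\A^{-1})$ (up to the harmless phase factor $c_\A$, which I would absorb silently as the paper does). By \eqref{eq:kh1}, $\mu(\A^{-1})\in FIO(\A^{-1},v_s)$, so Theorem~\ref{prod1} yields $T\mu(\A^{-1})\in FIO(\A\A^{-1},v_s)=FIO(\mathrm{Id},v_s)$. But $FIO(\mathrm{Id},v_s)$ is precisely the class of \psdo s with Weyl symbol in $M^{\infty,1}_{1\otimes v_s}(\rdd)$ by Proposition~\ref{charpsdo} (this is the remark at the end of Section~2). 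Hence there is $\sigma_1\in M^{\infty,1}_{1\otimes v_s}$ with $T\mu(\A^{-1})=\sigma_1^w$, i.e. $T=\sigma_1^w\mu(\A)$. The second factorization comes from writing instead $\sigma_2^w:=\mu(\A)^{-1}T=\mu(\A^{-1})T\in FIO(\mathrm{Id},v_s)$ by the same composition argument, so $\sigma_2\in M^{\infty,1}_{1\otimes v_s}$ and $T=\mu(\A)\sigma_2^w$.

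Finally, the relation \eqref{hormander} follows by comparing the two representations: from $\sigma_1^w\mu(\A)=\mu(\A)\sigma_2^w$ we get $\sigma_2^w=\mu(\A)^{-1}\sigma_1^w\mu(\A)$, and the conjugation formula for the Weyl calculus under metaplectic operators (the symplectic covariance of the Weyl transform, $\mu(\A)^{-1}\sigma^w\mu(\A)=(\sigma\circ\A)^w$, see~\cite{folland89,Gos11}) gives $\sigma_2=\sigma_1\circ\A$. As a consistency check, Lemma~\ref{lkh1}(ii) guarantees $\sigma_1\circ\A\in M^{\infty,1}_{1\otimes v_s}$, so this is compatible with $\sigma_2$ being in the correct symbol class. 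I expect the main (only) subtle point to be the bookkeeping of the phase factor $c_\A$ in \eqref{metap1} and the precise form of the symplectic covariance identity for the Weyl calculus; everything else is a direct assembly of Proposition~\ref{charpsdo}, the estimate \eqref{eq:kh1}, and the algebra property of Theorem~\ref{prod1}.
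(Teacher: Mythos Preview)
Your proposal is correct and follows essentially the same route as the paper: both directions are obtained by combining Proposition~\ref{charpsdo} (identifying $FIO(\mathrm{Id},v_s)$ with \psdo s having symbol in $M^{\infty,1}_{1\otimes v_s}$), the estimate \eqref{eq:kh1} (so that $\mu(\A^{\pm 1})\in FIO(\A^{\pm 1},v_s)$), and the algebra property of Theorem~\ref{prod1}. The only cosmetic difference is that the paper obtains the second factorization directly from the first via the symplectic covariance $\mu(\A)^{-1}\sigma_1^w\mu(\A)=(\sigma_1\circ\A)^w$, thereby reading off \eqref{hormander} immediately, whereas you derive $\sigma_2$ independently and then compare; either way is fine.
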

\begin{proof}
First, assume  $T\in FIO(\A,v_s)$. We prove the factorization
$T=\sigma_1^w\mu(\A)$.
Since  $\mu(\A)^{-1}=\mu(\A^{-1})$ is in $FIO( \cA \inv ,v_s)$ by
Lemma~\ref{lkh1}(i),   the algebra property of Theorem~\ref{prod1}
implies that
$T\mu(\A^{-1})\in FIO(\mathrm{Id},v_s)$. Now  Proposition~\ref{charpsdo}
 implies the existence of a symbol
$\sigma _1 \in M^{\infty,1}_{1\otimes v_s}(\rdd)$, such that
$T\mu(\A)^{-1}=\sigma^w_1$, as  claimed.\par

Since $T=\mu (\A ) \mu (A)\inv
\sigma_1^w\mu(\A) = \mu (\A ) (\sigma _1 \circ \A ) ^w $ by the
symplectic invariance properties of the Weyl calculus
(e.g. \cite[Theorem 18.5.9]{hormander3}), the
factorization $T=\mu (\cA ) \sigma _2^w$  and \eqref{hormander} follow immediately.

Conversely, assume $T=\sigma^w_1\mu(\A)$ with symbol $\sigma\in
M^{\infty,1}_{1\otimes v_s}$. Since
$\mu(\A)\in FIO(\A,v_s)$ by \eqref{eq:kh1} and  $\sigma^w_1\in
FIO(\mathrm{Id},v_s)$ by Proposition~\ref{charpsdo},   Theorem \ref{prod1} yields that
$T=\sigma^w_1\mu(\A)\in FIO(\A\circ
\mathrm{Id},v_s)=FIO(\A,v_s)$, as desired.
\end{proof}
\begin{remark}\rm  The characterization \eqref{pseudomu1} carries over
  to other calculi of pseudodifferential operators, in particular the
  Kohn-Nirenberg correspondence, whereas \eqref{hormander} is peculiar
  of the Weyl correspondence. 
\end{remark}

The characterization of Theorem~\ref{pseudomu} offers an alternative
approach to the definition and analysis of generalized metaplectic
operators. We could have defined  the class $ FIO(\A,v_s)$ by the
factorization \eqref{pseudomu1}. Then the basic properties of
generalized metaplectic operators can be derived directly from this
definition and Proposition~\ref{fund}.  The mapping properties of generalized
metaplectic operators  follow from the known mapping properties of
$\mu (\A )$ and of pseudodifferential operators. If $S= \sigma ^w
\mu (\A _1)$ and $T=\tau ^w \mu (\A _2)$, then 
$$
ST =  \sigma ^w\mu (\A _1) \tau ^w \mu (\A _2) = \sigma ^w
 \big( \tau \circ \A _1)^w \mu (\A _1)  \mu (\A _2)  \, .
$$ 
Thus $ST \in FIO (\A _1\A _2, v_s)$ by Proposition~\ref{fund}(ii).
 If $T = \sigma ^w  \mu (\cA )  \in FIO(\A , v_s) $ is
invertible on  $\lrd $, then so it $\sigma ^w = T\mu (\A \inv
)$. Once more by Proposition~\ref{fund}(iii) the inverse $(T\mu (\A \inv
))\inv = \tau ^w$ possesses a  symbol $\tau $ in $\mifs
$. Consequently $T\inv = \mu (\A \inv) \tau ^w$ and
by~\eqref{pseudomu1} $T\inv \in FIO(\A \inv , v_s)$. 

With this approach, however, the proof of the 
off-diagonal decay~\eqref{def1.1} of $FIO(\A , v_s)$ requires more
work.

\section{Application to Schr\"odinger Equations}
As an application, we  study the Cauchy problem
  for linear Schr\"odinger
  equations of the type
  \begin{equation}\label{C1}
\begin{cases} i \displaystyle\frac{\partial
u}{\partial t} +H u=0\\
u(0,x)=u_0(x),
\end{cases}
\end{equation}
with $t\in \bR$ and the initial condition $u_0\in\cS(\rd)$ or some
larger space.  We consider  a Hamiltonian of the form
\begin{equation}\label{C1bis}
H=a^w+ \sigma^w,
\end{equation}
 where $a^w$ is the Weyl quantization of a real homogeneous  quadratic polynomial  on
$\rdd$ and  $\sigma^w$ is a pseudodifferential operator  with a
symbol $\sigma\in M^{\infty,1}_{1\otimes v_s}$, $s\geq0$. If $\sigma
\in \mif _{1\otimes v_s} (\rdd )  \subseteq \mif (\rdd )  $, then $\sigma ^w$ is bounded
on $\lrd $ and on all \modsp s $M^p _m$ for $1\leq p\leq \infty$ and
$v_s$-moderate weight $m$.  Consequently,  $H = a^w + \sigma ^w$ is a
bounded perturbation of the generator $H_0 = a^w$  of a unitary
group by~\cite{RS75}, and $H$ is the generator of a well-defined
(semi-)group.

This set-up includes the usual Schr\"odinger equation $H= \Delta -
V(x)$  and the perturbation of the
harmonic oscillator $H= \Delta - |x|^2 - V(x)$ with a potential $V\in
M^{\infty,1} (\rd )$. Note that $V$ is bounded, but not necessarily smooth.

We first consider the unperturbed  case $\sigma=0$, namely
\begin{equation}\label{C12}
\begin{cases} i \displaystyle\frac{\partial
u}{\partial t} +a^w u=0\\
u(0,x)=u_0(x).
\end{cases}
\end{equation}
Then  the solution is given by metaplectic operators
$u=\mu(\mathcal{A}_t)u_0$, for a suitable symplectic matrix
$\mathcal{A}_t$. Precisely, if $a(x,\xi)=\frac{1}{2}xAx+\xi B
x+\frac{1}{2}\xi C\xi$, with $A, C$ symmetric and $B$ invertible, we
can consider the classical evolution, given by the linear Hamiltonian
system
\[
\begin{cases}
2\pi \dot x=\nabla  _\xi a  =Bx+C\xi\\
2\pi \dot \xi=-\nabla _x a =-A x-B^T\xi
\end{cases}
\]
(the factor $2\pi$ depends on our normalization of the Fourier transform) with Hamiltonian matrix $\mathbb{A}:=\begin{pmatrix}B&C\\
  -A&-B^T\end{pmatrix}\in \mathrm{sp} (d,\bR )$. Then we have
$\mathcal{A}_t=e^{t\mathbb{A}}\in Sp(d,\R)$, and $\mu(\mathcal{A}_t)$
is given by the formula \eqref{eq:kh21} with $\mathcal{A}$ replaced by
$\mathcal{A}_t$ with $\sigma \equiv 1$  (see \cite{folland89} and \cite[Section 5]{cn} for
details). \par
Our main result shows that the propagator of the   perturbed problem
\eqref{C1} -- \eqref{C1bis}  
is  a generalized metaplectic operator and thus can be written as a
Fourier integral operator with a quadratic phase.
\begin{theorem}\label{teofinal}
Let $H = a^w + \sigma^w$ be a Hamiltonian with a real quadratic
homogeneous polynomial $a$ and  $\sigma\in
M^{\infty,1}_{1\otimes v_s}(\rdd ) $ and $U (t) = e^{itH}$ the corresponding
propagator. Then $U(t)$ is a generalized metaplectic operator for each
$t >0$.
Specifically, the solution of the homogenous problem $iu_t + a^w u = 0$ is given by a
metaplectic operator $\mu(
\mathcal{A}_t)$, and $e^{itH}$
is of the from
\[
e^{itH} = \mu(\mathcal{A}_t)b^w_t
\]
for some symbol $b_t\in M^{\infty,1}_{1\otimes v_s}(\rdd)$.
\end{theorem}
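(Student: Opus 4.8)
The plan is to reduce the problem to the pseudodifferential calculus by conjugating with the free metaplectic propagator $\mu(\mathcal{A}_t)$ and then showing that the conjugated evolution solves an operator-valued ODE inside the Banach algebra of pseudodifferential operators with symbols in $\mifs(\rdd)$. Concretely, set $V(t) = e^{itH}$ and $b^w_t := \mu(\mathcal{A}_t)^{-1} V(t) = \mu(\mathcal{A}_{-t}) e^{itH}$, so that the claim $e^{itH} = \mu(\mathcal{A}_t) b^w_t$ with $b_t \in \mifs$ is equivalent to showing $B(t) := \mu(\mathcal{A}_{-t}) e^{itH}$ is a pseudodifferential operator with symbol in $\mifs$ for every $t$. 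Differentiating in $t$ and using $\tfrac{d}{dt}\mu(\mathcal{A}_t) = i\,a^w \mu(\mathcal{A}_t) = i\,\mu(\mathcal{A}_t)(a\circ\mathcal{A}_t)^w$ (the latter by the symplectic covariance of the Weyl calculus, cf.\ Theorem~18.5.9 of \cite{hormander3}) together with $\tfrac{d}{dt}e^{itH} = iH e^{itH}$, one obtains
\[
\frac{d}{dt} B(t) = i\,\mu(\mathcal{A}_{-t})\big(H - a^w\big)e^{itH} = i\,\mu(\mathcal{A}_{-t})\,\sigma^w\,e^{itH} = i\,\big(\sigma \circ \mathcal{A}_{-t}\big)^w B(t) =: i\,c_t^w\, B(t),
\]
with $c_t = \sigma \circ \mathcal{A}_{-t}$, and $B(0) = \mathrm{Id}$. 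By Lemma~\ref{lkh1}(ii) the symbol $c_t$ stays in $\mifs$, with norm $\|c_t\|_{\mifs} \leq \|(\mathcal{A}_{-t}^T)^{-1}\|^s \|V_{\Phi\circ\mathcal{A}_{-t}}\Phi\|_{L^1_{v_s}} \|\sigma\|_{\mifs}$, and this bound depends continuously on $t$ and is locally bounded.

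The core of the argument is then to solve this linear non-autonomous ODE inside the Banach algebra $\mathcal{B} := \{ \tau^w : \tau \in \mifs\}$, which is a Banach algebra under composition by Proposition~\ref{fund}(ii) (with the symbol norm transported to the operator). The family $t \mapsto c_t^w$ is a strongly continuous (indeed norm-continuous, using the continuity of $t\mapsto c_t$ in $\mifs$, which follows by dominated convergence in \eqref{otto} and continuity of the $\mathcal{A}_t$ action) family of elements of $\mathcal{B}$ with locally bounded norm. Hence the Picard iteration
\[
B(t) = \mathrm{Id} + \sum_{n=1}^\infty i^n \int_0^t \!\!\int_0^{t_1}\!\!\cdots\!\int_0^{t_{n-1}} c_{t_1}^w c_{t_2}^w \cdots c_{t_n}^w \, dt_n \cdots dt_1
\]
converges absolutely in the operator norm of $\mathcal{B}$ (the $n$-th term is bounded by $\tfrac{|t|^n}{n!}\big(\sup_{|s|\le|t|}\|c_s^w\|_{\mathcal{B}}\big)^n$), and therefore its sum $B(t)$ lies in $\mathcal{B}$, i.e.\ $B(t) = b_t^w$ with $b_t \in \mifs$. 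To see that this Picard series actually equals $\mu(\mathcal{A}_{-t})e^{itH}$, note that both solve the same ODE $\dot{X}(t) = ic_t^w X(t)$, $X(0)=\mathrm{Id}$, in the strong operator topology on $L^2(\rd)$ — the Picard series does so because $\mathcal{B} \hookrightarrow B(L^2)$ continuously by Proposition~\ref{fund}(i), and $\mu(\mathcal{A}_{-t})e^{itH}$ does so by the computation above — and the ODE has a unique solution in $B(L^2)$ since $c_t^w$ is uniformly bounded on $L^2$ (Gronwall). Finally, $e^{itH} = \mu(\mathcal{A}_t) b_t^w$, which by Theorem~\ref{pseudomu} is precisely the statement that $e^{itH} \in FIO(\mathcal{A}_t, v_s)$, i.e.\ $U(t)$ is a generalized metaplectic operator; the second assertion of Theorem~\ref{tc3} about preservation of $M^p_{v_s}$ then follows from Theorem~\ref{teo3.3} (with $m \equiv v_s$, using $v_s \circ \mathcal{A}_t^{-1} \asymp v_s$).

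The main obstacle I anticipate is the bookkeeping to justify that the Picard series converges \emph{in the $\mifs$-symbol norm} rather than merely in operator norm on $L^2$: this is exactly where the Banach-algebra structure of Proposition~\ref{fund}(ii) is indispensable, and one must verify that the algebra norm is submultiplicative (or equivalent to a submultiplicative one) so that $\|c_{t_1}^w\cdots c_{t_n}^w\|_{\mathcal{B}} \leq \prod \|c_{t_j}^w\|_{\mathcal{B}}$. A secondary point requiring care is the norm-continuity (or at least strong measurability and local boundedness) of $t \mapsto c_t^w \in \mathcal{B}$, needed for the integrals in the series to make sense as Bochner integrals; this rests on Lemma~\ref{lkh1}(ii) and the smoothness of $t \mapsto \mathcal{A}_t = e^{t\mathbb{A}}$. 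Everything else — the covariance identity, the identification of the two solutions via uniqueness on $L^2$, and the final application of Theorem~\ref{pseudomu} — is routine once these two analytic facts are in place.
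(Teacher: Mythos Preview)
Your overall strategy---conjugate by $\mu(\mathcal{A}_t)$, derive the ODE $\dot B(t)=i\,c_t^w B(t)$ with $c_t=\sigma\circ\mathcal{A}_{-t}$, and solve by a Dyson--Phillips/Picard series---is exactly the route the paper takes. The gap is in what you call the ``secondary point'': the map $t\mapsto c_t$ is \emph{not} norm-continuous in $M^{\infty,1}_{1\otimes v_s}$, nor even Bochner-measurable in general, and the paper says so explicitly. Lemma~\ref{lkh1}(ii) gives only a uniform bound on $\|c_t\|_{\mifs}$, not continuity, and dominated convergence in the defining integral~\eqref{otto} cannot help because the inner supremum over $z$ destroys continuity. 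Concretely, take $\sigma(z)=e^{2\pi i\xi_0\cdot z}$ for a fixed $\xi_0\in\rdd$; then $\sigma\circ\mathcal{A}_t^{-1}=e^{2\pi i(\mathcal{A}_t^T\xi_0)\cdot z}$, and a short computation shows that for any $\xi\neq\xi'$ one has $\sup_{u}|V_g(e^{2\pi i\xi\cdot z}-e^{2\pi i\xi'\cdot z})(u,\zeta)|=|\hat g(\zeta-\xi)|+|\hat g(\zeta-\xi')|$, so $\|e^{2\pi i\xi\cdot z}-e^{2\pi i\xi'\cdot z}\|_{M^{\infty,1}}=2\|\hat g\|_1$ is bounded away from zero. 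Hence $\{c_t:t\in[0,T]\}$ is an uncountable set with pairwise distances bounded below, so it is not separably valued and the Bochner integrals in your Picard series are undefined in $\mifs$.

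The paper's fix is to abandon the Banach-algebra viewpoint and work instead with the \emph{controlling functions} of Proposition~\ref{charpsdo}: each $c_t^w$ has a kernel estimate $|\langle c_t^w\pi(z)g,\pi(w)g\rangle|\le H_t(w-z)$ with $H_t\in L^1_{v_s}$, and the product $c_{t_1}^w\cdots c_{t_n}^w$ is controlled by $H_{t_1}\ast\cdots\ast H_{t_n}$. The point is that $(t,z)\mapsto H_t(z)$ is lower semi-continuous (hence Borel measurable) and $\|H_t\|_{L^1_{v_s}}$ is locally bounded in $t$, so Fubini--Tonelli applies to the \emph{nonnegative scalar} integrals $\int_0^t\!\cdots\!\int_0^{t_{n-1}}\|H_{t_1}\ast\cdots\ast H_{t_n}\|_{L^1_{v_s}}\,dt_n\cdots dt_1\le \tfrac{t^n}{n!}M(t)^n\|\sigma\|_{\mifs}^n$. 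Summing, one obtains a single controlling function in $L^1_{v_s}$ for $C(t)=\sum_n C_n(t)$, and Proposition~\ref{charpsdo} then yields $C(t)=b_t^w$ with $b_t\in\mifs$. The Dyson series itself is interpreted only in the strong operator topology on $L^2$ (where standard semigroup theory guarantees convergence), and the $\mifs$-membership of the limit is read off a posteriori from the kernel estimate. You should rewrite your argument along these lines: keep the iteration, but replace the Bochner-integral bookkeeping in $\mifs$ by pointwise kernel estimates and Tonelli on the $L^1_{v_s}$-norms of the controlling functions.
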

\begin{corollary}\label{corfinal}
 If the initial condition $u_0$ is in a modulation space $M^p_
m(\rd)$, then $u(t,\cdot)\in M^p_{m\circ \mathcal{A}^{-1}_t}(\rd)$ for every $t>0$.
In particular, if $m\asymp m\circ \mathcal{A}$
for all $\mathcal{A} \in Sp(d,\R)$ (as is the case for $v_s$), then
the time evolution leaves $M^p_
m(\rd)$ invariant. In other words, the Schr\"odinger evolution
preserves the phase space concentration of the initial condition.
\end{corollary}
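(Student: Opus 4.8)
The plan is to establish the factorization $e^{itH}=\mu(\mathcal{A}_t)b_t^w$ with $b_t\in\mifs(\rdd)$ and then read off membership in $FIO(\mathcal{A}_t,v_s)$ from Theorem~\ref{pseudomu}. First I would pass to the interaction picture. The free propagator $U_0(t):=e^{ita^w}$ is, by the discussion preceding the theorem, the metaplectic operator $\mu(\mathcal{A}_t)$ with $\mathcal{A}_t=e^{t\mathbb{A}}\in Sp(d,\R)$; and since $\sigma^w$ is bounded on $L^2(\rd)$, the perturbed Hamiltonian $H=a^w+\sigma^w$ generates a strongly continuous one-parameter group $U(t)=e^{itH}$. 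Setting $R(t):=\mu(\mathcal{A}_t)^{-1}U(t)$, Duhamel's formula together with the symplectic covariance of the Weyl calculus in the form $\mu(\mathcal{A})^{-1}\sigma^w\mu(\mathcal{A})=(\sigma\circ\mathcal{A})^w$ (cf.\ the proof of Theorem~\ref{pseudomu}) shows that $R$ solves the operator Volterra equation
\[
R(t)=\mathrm{Id}+i\int_0^t (\sigma\circ\mathcal{A}_s)^w\,R(s)\,ds .
\]

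Next I would solve this equation by a Dyson series living inside the Banach algebra $FIO(\mathrm{Id},v_s)$ of pseudodifferential operators with symbol in $\mifs(\rdd)$; its multiplicativity is precisely the estimate~\eqref{eq:kh3} for the controlling functions combined with $\|H_1\ast H_2\|_{L^1_{v_s}}\le\|H_1\|_{L^1_{v_s}}\|H_2\|_{L^1_{v_s}}$, valid since $v_s$ is submultiplicative for $s\ge0$. By Lemma~\ref{lkh1}(ii) each $\sigma\circ\mathcal{A}_s$ lies in $\mifs(\rdd)$, and since $s\mapsto\mathcal{A}_s$ is smooth the constants $\|(\mathcal{A}_s^T)^{-1}\|^s$ and $\|V_{\Phi\circ\mathcal{A}_s}\Phi\|_{L^1_{v_s}}$ in~\eqref{eq:kh2} stay bounded on each interval $|s|\le T$; hence $C_T:=\sup_{|s|\le T}\|\sigma\circ\mathcal{A}_s\|_{\mifs}<\infty$ and $s\mapsto(\sigma\circ\mathcal{A}_s)^w$ is continuous into $FIO(\mathrm{Id},v_s)$. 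Iterating the Volterra equation, the $n$-th term is $i^n\int_{0\le s_n\le\cdots\le s_1\le t}(\sigma\circ\mathcal{A}_{s_1})^w\cdots(\sigma\circ\mathcal{A}_{s_n})^w\,ds$; by~\eqref{eq:kh3} and the volume $|t|^n/n!$ of the simplex, its $\mifs$-norm is at most $C_T^n|t|^n/n!$, so the series converges absolutely in $FIO(\mathrm{Id},v_s)$, with $\|R(t)\|_{\mifs}\lesssim e^{C_T|t|}$. The same series also solves the Volterra equation in $B(L^2(\rd))$, where $\mu(\mathcal{A}_t)^{-1}U(t)$ is the unique solution; since $FIO(\mathrm{Id},v_s)\hookrightarrow B(L^2(\rd))$ continuously, the two agree. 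Thus $R(t)=b_t^w$ for some $b_t\in\mifs(\rdd)$, and $e^{itH}=\mu(\mathcal{A}_t)b_t^w\in FIO(\mathcal{A}_t,v_s)$ by Theorem~\ref{pseudomu}; for $t<0$ one may alternatively invoke the inverse-closedness of Theorem~\ref{Wineroper}.

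The hard part will be the interplay of the two topologies in the second paragraph: the operator equation must be solved in the strong $B(L^2)$ sense, so that its solution is genuinely $\mu(\mathcal{A}_t)^{-1}e^{itH}$, while one simultaneously needs convergence in the finer $\mifs$-norm. This is exactly what the a priori bound $C_T^n|t|^n/n!$ delivers — it is summable and uniform on compacta, forcing $FIO(\mathrm{Id},v_s)$-convergence — after which the identification with the physical propagator is only the uniqueness theorem for a linear Volterra equation. A subordinate technical point is the uniform bound $C_T<\infty$, which comes down to the continuity in $\mathcal{A}$ of the quantities appearing in~\eqref{eq:kh2}.

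Finally, Corollary~\ref{corfinal} is immediate from Theorem~\ref{teo3.3}: since $e^{itH}\in FIO(\mathcal{A}_t,v_s)$, it maps $M^p_m(\rd)$ into $M^p_{m\circ\mathcal{A}_t^{-1}}(\rd)$ for $1\le p\le\infty$; if $m\asymp m\circ\mathcal{A}$ for every $\mathcal{A}\in Sp(d,\R)$ — as for $m=v_s$ by~\eqref{pesieq} — this says that $M^p_m(\rd)$ is invariant under the propagator, i.e.\ the phase-space concentration of the data is preserved.
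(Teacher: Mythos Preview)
Your final paragraph is exactly the paper's proof of the Corollary: once Theorem~\ref{teofinal} gives $e^{itH}\in FIO(\mathcal{A}_t,v_s)$, Theorem~\ref{teo3.3} yields the mapping $M^p_m\to M^p_{m\circ\mathcal{A}_t^{-1}}$, and the invariance for $m=v_s$ follows from~\eqref{pesieq}. The rest of your proposal is really a reproof of Theorem~\ref{teofinal}, and there the overall Dyson--Phillips strategy matches the paper's, but there is a genuine gap.

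The claim that $s\mapsto(\sigma\circ\mathcal{A}_s)^w$ is continuous into $FIO(\mathrm{Id},v_s)$ is false in general; the paper says so explicitly (``in general the mapping $t\mapsto (\sigma\circ\mathcal{A}_t^{-1})^w$ is not continuous with respect to the $\mifs$-norm''). Your justification---that the constants in~\eqref{eq:kh2} stay bounded---only gives the uniform bound $C_T<\infty$, not norm continuity; the $L^\infty$ component of the $\mifs$-norm obstructs strong continuity of linear changes of variable, just as translations fail to act strongly continuously on $M^{\infty,1}$. Without this continuity the Bochner integrals over the simplex are not defined as $\mifs$-valued integrals, so the sentence ``its $\mifs$-norm is at most $C_T^n|t|^n/n!$'' has no object to refer to.

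The paper repairs exactly this point by never forming vector-valued integrals in $\mifs$. Instead it works with the scalar matrix coefficients $\langle C_n(t)\pi(z)g,\pi(w)g\rangle$, bounds the integrand pointwise via~\eqref{eq:kh3} by $(H_{t_1}\ast\cdots\ast H_{t_n})(w-z)$, checks that $(t,z)\mapsto H_t(z)$ is lower semicontinuous hence Borel measurable, and then applies Fubini--Tonelli to the nonnegative function $\prod_j H_{t_j}(z_j)$ to produce a controlling function $\mathcal{H}_n(t)\in L^1_{v_s}$ with $\|\mathcal{H}_n(t)\|_{L^1_{v_s}}\le (tM(t)\|\sigma\|_{\mifs})^n/n!$. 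Summing and invoking Proposition~\ref{charpsdo} then shows $C(t)=b_t^w$ with $b_t\in\mifs$. Your estimate $C_T^n|t|^n/n!$ is morally the same bound, but it must be established at the level of controlling functions in $L^1_{v_s}$, not as a norm bound on a Bochner integral that does not exist.
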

\begin{proof}[Proof of Corollary \ref{corfinal}]
It follows from Theorem \ref{teofinal} and the representation in
Theorem \ref{pseudomu} that $e^{itH}\in FIO(\A,v_s)$, so that the
claim follows from Theorem \ref{teo3.3}.
\end{proof}

Before we turn to the proof of Theorem \ref{teofinal}, we recapitulate the facts about the
bounded perturbation of operator (semi)groups. We refer to the
textbooks~\cite{RS75} and \cite{EN06} for an introduction to operator
semigroups.

Let $H_0$ be the generator of a strongly continuous one-parameter group  on a Banach space
$\cX $ and $T(t) = e^{it H_0} $ be the corresponding (semi)group that
solves the evolution equation $i \frac{dT(t)}{dt} = H_0 T(t)$. Let $B$
be a bounded operator on $\cX $ and $H=H_0 +B$. Then $H$ is the
generator of a strongly continuous one-parameter group $S(t)$~\cite{EN06}.

In our case
$H_0 = a^w$
for a real homogeneous quadratic polynomial and $\sigma \in \mifs
$. As explained above,  $T(t) = e^{i t H_0} = \mu (\cA _t)$ for a one-parameter group
$t \mapsto \cA _t \in Sp(d,\R) $ of symplectic matrices.
The Banach space is a \modsp\  $M^p_m(\rd )$ with a weight satisfying
$m \asymp m\circ \cA $ for every $\cA \in Sp(d,\R )$. Then $e^{itH_0}
= \mu (\cA _t)$ is a strongly continuous one-parameter group on
$M^p_m(\rd )$. The perturbation is a \psdo\ $B=\sigma ^w$ with a
symbol $\sigma \in \mifs (\rdd)$. Since by
Proposition~\ref{fund}(i) $\sigma ^w$ is bounded on $M^p_m(\rd)$, whenever
$m$ is  $v_s$-moderate and $1\leq p\leq \infty $, $B$ is a
bounded perturbation of $H_0$. Therefore the perturbed Hamiltonian $H=
a^w + \sigma ^w$ generates a strongly continuous semigroup
on $M^p_m(\rd )$. Thus the second statement of Corollary~\ref{corfinal}
also follows directly from the standard theory~\cite[Ch.~3, Cor.~1.7]{EN06}.

For the stronger statement of Theorem~\ref{teofinal} we need to work
harder and explain how the perturbed semigroup is constructed. The
perturbed semigroup $S(t) = e^{itH}$ satisfies an  abstract Volterra
equation
\begin{equation}
  \label{eq:kh6}
  S(t)x = T(t)x + \int _0^t T(t-s) B S(s) x \, ds =
  T(t)\Big(\mathrm{Id} + \int _0^t T(-s) B T(s) T(-s) S(s)  \, ds
  \Big) x
\end{equation}
for every $x\in \cX $ and $t\geq 0$. The correction operator $C(t) =
T(-t)S(t)$ therefore satisfies the Volterra equation
\begin{equation}
  \label{eq:kh7}
  C(t) =  \mathrm{Id} + \int _0^t T(-s) B T(s) C(s)   \, ds  \, ,
\end{equation}
where the integral is to be understood in the strong sense.
Now write $B(s) = T(-s) B T(s)$, then the solution of~\eqref{eq:kh7}
can be written as a so-called \emph{Dyson-Phillips expansion}
(\cite[X.69]{RS75} or \cite[Ch.~3, Thm.~1.10]{EN06})
\begin{equation}
  \label{eq:kh8}
  C(t) = \mathrm{Id} + \sum _{n=1}^\infty (-i)^n \int _0^t \int
  _0^{t_1} \dots \int
  _0^{t_{n-1}} B(t_1) B(t_2) \dots B(t_n) \, dt_1 \dots dt_n := \sum
  _{n=0}^\infty C_n(t) \, .
\end{equation}


\begin{proof}[Proof of Theorem \ref{teofinal}]
In our case $T(t) = \mu (\A _t)$,  $B = \sigma ^w $, and
$$
B(t) = \mu (\A _{-t}) \sigma ^w \mu (\A _t) = (\sigma \circ \A _t
\inv )^w  \, .
$$
We want to show that $C(t)$ in~\eqref{eq:kh8}
is a \psdo\ with a (Weyl) symbol in $\mifs (\rdd)$.  Since in general the
mapping $t\mapsto (\sigma \circ \A _t \inv )^w$ is not continuous
with respect to the $\mifs $-norm,  we have to exert special care with
the vector-valued integrals in~\eqref{eq:kh8}. We will use  the
characterization  of pseudodifferential operators
(Proposition~\ref{charpsdo}) and show that $|\langle C_n(t) \pi (z)g,
\pi (w)g\rangle | \leq \cH _n(t)$ for a controlling function in
$L^1_{v_s}(\rdd )$ for every $t>0$, such that $\sum_n \cH _n(t) \in
L^1_{v_s}(\rdd )$. This procedure  will involve only a weak
interpretation of these integrals.
We proceed in several steps.

\emph{Step 1: Pointwise estimate of  the integrand of~\eqref{eq:kh8}.}
Let $H_t \in L^1_{v_s}(\rdd )$ be the controlling function of $\sigma
\circ \A _t\inv $. Then by~\eqref{eq:kh3} we have
$$
|\langle \prod _{j=1}^n B(t_j) \pi (z) g, \pi (w) g\rangle | \leq
( H_{t_1} \ast H_{t_2} \ast \dots \ast H_{t_n} ) (w-z) \, .
$$
According to \eqref{eq:kh8} the natural controlling function for
$\langle C_n(t) \pi (z)g, \pi (w)g\rangle $  will be the function
$$
\cH _n(t) = \int _0^t \int
  _0^{t_1} \dots \int
  _0^{t_{n-1}} H_{t_1} \ast H_{t_2} \ast \dots \ast H_{t_n}  \, dt_1
  \dots dt_n \, ,
$$
if $\cH _n (t)$ exists.

\emph{Step 2: Measurability of $H_t$.} We note that the function
$(t,u,z) \mapsto \langle \sigma \circ \A _t\inv , M_{j(z)} T_u \Phi
\rangle =  \langle \sigma  , M_{\A ^T_t j(z)} T_{\A _t \inv u} (\Phi
\circ \A _t)\rangle $  is continuous for every $\sigma \in \cS ' (\rdd
)$ and $\Phi \in \cS (\rdd )$, because $t\mapsto \A _t$ is continuous
and \tfs s are strongly continuous on $\cS (\rdd )
$~\cite{folland89,book}. Consequently the controlling function
$$
H_t(z) = \sup _{u\in \rdd } |\langle \sigma \circ \A _t\inv , M_{j(z)} T_u \Phi
\rangle|
$$
 is lower semi-continuous and thus (Borel) measurable in the variables
 $t$ and $z$.

\emph{Step 3: Boundedness of $\|H_t\|_{L^1_{v_s}}$.}
We  use Lemma~\ref{lkh1} and estimate
\begin{align*}
  \sup _{0\leq r \leq t} \|H_r\|_{L^1_{v_s}} & \leq  \sup_{0\leq r \leq
    t} \|\sigma \circ \cA _r \inv \|_{L^1_{v_s}} \\
&\leq  \|\sigma \|_{\mifs } \sup _{0\leq r \leq t} \|\A _r \|^s \,  \| V_{\Phi \circ
  \A _r} \Phi \|_{L^1_{v_s}} \leq M(t) \|\sigma \|_{\mifs } \, .
\end{align*}
For  $\Phi \in \cS (\rdd )$ it is easy to see that $M(t) = \sup _{0\leq r \leq t} \|\A _r \|^s \,  \| V_{\Phi \circ
  \A _r }\Phi \|_{L^1_{v_s}}$ is finite.

\emph{Step 4: Integrability of $\cH _n(t)$.} Since the product  $\prod _{j=1}^n H_{t_j}(z_j)$ is measurable and
non-negative, we may choose an arbitrary order of integration and then
apply Fubini-Tonelli to verify the integrability  of each term
in~\eqref{eq:kh8}. Using $$\|H_{t_1}\ast \ldots \ast
H_{t_{n}} \|_{L^1_{v_s}}\leq \prod_{j=1}^n\|H_{t_j}\|_{L^1_{v_s}},$$
 \begin{align*}
\|\cH _n (t) \|_{L^1_{v_s}}&= \intrdd
\int_0^t\int_0^{t_1}\cdots\int_0^{t_{n-1}} (H_{t_1}\ast \ldots \ast
H_{t_{n}} ) (w)  dt_1 \ldots dt_n v_s(w) dw\\
&\leq \int_0^t\int_0^{t_1}\cdots\int_0^{t_{n-1}}  \|H_{t_1}
\|_{L^1_{v_s}} \dots \|H_{t_n}
\|_{L^1_{v_s}}  \, dt_1 \ldots dt_n \\
&\leq \frac{t^n }{n!}  M(t)^n \|\sigma \|_{\mifs } ^n\, .
 \end{align*}
In the last inequality we have used the boundedness established in
Step~3.

\emph{Step 5: Combination of estimates.}
By combining all estimates we arrive at
\begin{align*}
|\langle C(t) \pi (z) g , \pi (w)g\rangle | & \leq   \sum
_{n=0}^\infty |\langle C_n(t) \pi (z) g , \pi (w)g\rangle | \\
 & \leq \sum _{n=1}^\infty \cH _n(t) (z-w) \, .
\end{align*}
By Step~4 the sum of the controlling functions is still in $L^1_{v_s}(\rdd)$
and satisfies
$$
 \|\sum _{n=1}^\infty \cH _n(t) \|_{L^1_{v_s}} \leq \sum _{n=0}^\infty
 \frac{t^n }{n!}  M(t)^n \|\sigma \|_{\mifs } ^n  = e^{tM(t)\|\sigma
   \|_{\mifs } } \, .
$$
By Proposition~\ref{charpsdo},  $C(t) = b_t ^w$ for a symbol $b_t$ in
$\mifs (\rdd)$. Finally, the propagator is
$$
e^{itH} = T(t) C(t) = \mu (\A _t) b_t^w
$$
and thus $e^{itH} $ is a generalized metaplectic operator by
Theorem~\ref{pseudomu}. This finishes the proof of Theorem~\ref{teofinal}.
\end{proof}
\begin{remark}\rm The results in Theorem \ref{teofinal} and Corollary
  \ref{corfinal} also hold  for $t<0$. Just  replace $a$, $\sigma$ by $-a$, $-\sigma$.
\end{remark}
\begin{remark}\rm In the proof of Theorem~\ref{teofinal} we have only
  used the invariance of $\mifs $ under metaplectic operators
  (Lemma~\ref{lkh1})  and  properties of the symbol class $\mif
  _{1\otimes
    v_s}(\rdd )$ stated in Proposition~\ref{fund}, namely the
  boundedness on \modsp s and  the algebra   property.

Therefore a modified version of Theorem~\ref{teofinal} holds for every
symbol class that is closed with respect to composition of operators,
invariant under metaplectic operators and bounded on \modsp s.
We mention two classes for which these assumptions are satisfied. \\

(a) Theorem~\ref{teofinal} holds for the symbol class
$M^{\infty}_{1\otimes v_s}(\rdd)$ with  $s>2d$. The relevant
properties  were proved in \cite{Wiener}.

(b) Theorem~\ref{teofinal} holds for those ``\psdo s'' whose kernel
satisfies the conditions of Schur's test. Precisely, we define $\Sigma
(\rdd )$ to contain all symbols $\sigma \in \cS ' (\rdd )$ such that
\begin{equation}
  \label{eq:c1}
  \sup _{z\in \rdd } \intrdd |\langle \sigma ^w \pi (z)g, \pi (w)
  g\rangle | \, dw < \infty \quad \text{ and } \quad    \sup _{w\in \rdd } \intrdd |\langle \sigma ^w \pi (z)g, \pi (w)
  g\rangle | \, dz < \infty \, .
\end{equation}
The same proofs as in~\cite{charly06} show that $\Sigma (\rdd )$ is  closed with respect to composition of operators,
invariant under metaplectic operators and bounded on the \modsp s
$M^p(\rd )$.
Consequently we can use $\Sigma (\rdd )$ instead of $\mif _{1\otimes
  v_s}(\rdd)$ in Theorem~\ref{teofinal}.  In some sense, the symbol class
$\Sigma (\rdd )$ is the largest class of symbols for which the basic
hypotheses (i) -- (iii) of Proposition \ref{fund} hold.

\end{remark}
\begin{remark}\rm
We point out a wide classes of Hamiltonian functions for which
Corollary \ref{corfinal} (but not Theorem \ref{teofinal}) holds as
well. Let $a(x,\xi)$ be a {\it smooth real-valued} function satisfying
$\partial^{\alpha} a\in L^\infty$ for $|\alpha|\geq 2$.  Then it is
proved in \cite[Corollary 7.4]{tataru} that the propagators
$U(t)=e^{itH}$ belongs for every fixed $t$ to a space of Fourier
integral operators (denoted by $FIO(\chi,s)$ in \cite{Wiener}) for
which the continuity property on modulation spaces was proved in
\cite[Theorem 3.4]{Wiener}. Therefore the results in Corollary
\ref{corfinal} remain valid for such Hamiltonians. An example is given
by $H=\Delta-V(x)$, with $V(x)$ real-valued and satisfying
$\partial^{\alpha} V\in L^\infty$ for $|\alpha|\geq 2$. The continuity
result of $e^{itH}$ for this special case is also proved in
\cite[Theorem 1.1]{kki4}.
\end{remark}

\section{Representation of operators in $FIO(\A,v_s)$ of Type I}\label{gmetapoper}
Finally  we study the question which generalized metaplectic
operators are FIOs of type I.

\begin{theorem}\label{rappresentazione}
Let $s\geq 0$ and $\A=\begin{pmatrix}  A&B\\C&D\end{pmatrix} \in
Sp(d,\R)$ with $ \det A\not=0$, and  let
$T:\cS(\rd)\to\cS'(\rd)$ be a linear continuous operator.

Then $T\in
FIO(\mathcal{A},v_s)$ if and only if $T$ is a FIO of type I, i.e.,
\begin{equation}\label{fiotipo1}
Tf(x)=\int_{\rd} e^{2\pi i \Phi\phas} \sigma\phas \hat{f}(\o)d\o
\end{equation}
with the  quadratic phase  $ \Phi(x,\eta)=\frac12  x CA^{-1}x+
\eta  A^{-1} x-\frac12\eta  A^{-1}B\eta$ and a  symbol $\sigma\in M^{\infty,1}_{1\otimes v_s}(\rdd)$.
\end{theorem}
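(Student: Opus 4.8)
The plan is to deduce the statement from the factorization already available in Theorem~\ref{pseudomu}, which identifies $FIO(\mathcal{A},v_s)$ with the set of operators of the form $\sigma_1^w\mu(\mathcal{A})$, $\sigma_1\in\mifs(\rdd)$. Two facts then suffice. First, that the metaplectic operator $\mu(\mathcal{A})$ is itself a type~I FIO with phase exactly $\Phi$ from \eqref{fase} and constant symbol: since $\det A\neq 0$, the symplectic matrix $\mathcal{A}$ admits $\Phi$ as a generating function — one checks from the symplectic relations $A^TC=C^TA$ and $A^TD-C^TB=I$ that $CA^{-1}$ and $A^{-1}B$ are symmetric and that $(y,\eta)\mapsto(\nabla_\eta\Phi,\nabla_x\Phi)$ reproduces $\mathcal{A}$ — and the classical formula for the metaplectic representation on this ``free'' part of $Sp(d,\R)$ (see \cite{folland89,Gos11}) gives $\mu(\mathcal{A})f(x)=c_0\int_{\rd}e^{2\pi i\Phi(x,\eta)}\hat f(\eta)\,d\eta$ for a nonzero constant $c_0$. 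Constants belong to $\mifs(\rdd)$, since their STFT is independent of the space variable and Schwartz in the frequency variable.

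Second, and this is the core of the argument, one must show that composing a Weyl pseudodifferential operator $p^w$ with $p\in\mifs(\rdd)$ and a type~I FIO with phase $\Phi$ and symbol $\tau\in\mifs(\rdd)$ produces again a type~I FIO with the \emph{same} phase $\Phi$ and a symbol $\rho\in\mifs(\rdd)$. Writing $p^w\,\mathrm{Op}_I(\Phi,\tau)f(x)$ as an iterated oscillatory integral, the intermediate phase in the variables $(y,\xi)$ is $(x-y)\xi+\Phi(y,\eta)$, which is quadratic; hence the $(y,\xi)$-integration is an exact Gaussian integral (stationary phase with no remainder), with critical point $y=x$, $\xi=\nabla_y\Phi(x,\eta)$, producing the phase value $\Phi(x,\eta)$ and a new symbol $\rho$ obtained from $p$ and $\tau$ by a fixed affine substitution (in the $(x,\eta)$ variables) together with a fixed Gaussian twist. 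I would then invoke the stability of $\mifs(\rdd)$ under composition with invertible linear and affine maps (Lemma~\ref{lkh1}(ii) together with \eqref{pesieq}), under pointwise multiplication, and under the relevant Gaussian convolution, to conclude $\rho\in\mifs(\rdd)$. Since symbols in $\mifs$ are merely bounded, the oscillatory integrals have to be interpreted first on Schwartz symbols (or via the STFT/kernel characterization of Proposition~\ref{charpsdo}) and then extended by weak-$*$ density of $\cS$ in $\mifs$.

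Granting these two facts, both implications follow. If $T\in FIO(\mathcal{A},v_s)$, Theorem~\ref{pseudomu} gives $T=\sigma_1^w\mu(\mathcal{A})$ with $\sigma_1\in\mifs$; by the first fact $\mu(\mathcal{A})=c_0\,\mathrm{Op}_I(\Phi,1)$, and the second fact turns $T$ into a type~I FIO with phase $\Phi$ and symbol in $\mifs$, which is precisely \eqref{fiotipo1}. Conversely, given $T=\mathrm{Op}_I(\Phi,\sigma)$ with $\sigma\in\mifs$, set $P:=T\mu(\mathcal{A})^{-1}$. Since metaplectic operators are unitary, $\mu(\mathcal{A})^{-1}=\mu(\mathcal{A}^{-1})=\overline{c_\mathcal{A}}\,\mu(\mathcal{A})^\ast$, whose kernel is the (type~II) FIO associated with the inverse symplectic map $\mathcal{A}^{-1}$; the same exact Gaussian composition, now with composed canonical transformation $\mathcal{A}\mathcal{A}^{-1}=\mathrm{Id}$, shows that $P$ is a Weyl pseudodifferential operator with symbol $p\in\mifs(\rdd)$. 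Hence $T=p^w\mu(\mathcal{A})\in FIO(\mathcal{A},v_s)$ by Theorem~\ref{pseudomu}.

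The hard part is the composition/symbol-calculus step of the second paragraph: performing the exact Gaussian reduction of the phase $(x-y)\xi+\Phi(y,\eta)$ while keeping precise track of the resulting symbol, and proving that membership in the Sj\"ostrand class $\mifs$ survives the affine substitution and the Gaussian smoothing — the recurring technical obstacle being that these symbols are only bounded, so all manipulations require a careful density and weak-$*$ argument rather than absolutely convergent integrals.
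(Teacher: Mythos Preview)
Your proposal is correct and follows the same overall route as the paper: both implications are reduced, via Theorem~\ref{pseudomu} and the representation of $\mu(\mathcal{A})$ as a type~I FIO with phase $\Phi$, to the composition formula expressing $\sigma^w\mu(\mathcal{A})$ as a type~I FIO with the same phase and a symbol in $\mifs$. The paper isolates this step as Proposition~\ref{aggiunta0}.

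The difference lies in how that proposition is executed. You propose writing the full oscillatory integral in $(y,\xi)$, performing the exact Gaussian integration, and then extending by weak-$*$ density from Schwartz symbols; you correctly flag the density step as the technical obstacle. The paper avoids oscillatory integrals and density altogether: it applies $\sigma^w$ directly to the single exponential $e^{2\pi i\Phi(\cdot,\eta)}$, using the chirp intertwining formula $\sigma^w(e^{\pi i xAx}\cdot)=e^{\pi i xAx}(\mathcal{U}_1\sigma)^w$, then passes to the Kohn--Nirenberg symbol via the Fourier multiplier $\mathcal{U}$, and finally evaluates on the plane wave $e^{2\pi i\eta Bx}$. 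This yields the closed formula $\tilde\sigma=\mathcal{U}_2\mathcal{U}\mathcal{U}_1\sigma$, a composite of three manifest isomorphisms of $\mifs$ (two linear changes of variable and one chirp multiplier), valid for all $\sigma\in\mifs$ without approximation. For the converse direction, the paper then simply inverts this bijection to recover the Weyl symbol, rather than computing $T\mu(\mathcal{A})^{-1}$ as you do; the paper's route requires only one composition formula rather than two.
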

We need the following preliminary result.
\begin{proposition}\label{aggiunta0}
Let $\sigma\in M^{\infty,1}_{1\otimes v_s}(\rdd)$, and $A,B,C$ real
$d\times d$ matrices with  $B$ invertible,  and set
$\Phi(x,\eta)=\frac{1}{2}xAx+ \eta Bx+
\frac{1}{2}\eta C\eta$. Then there exists a symbol
$\tilde{\sigma}(x,\eta) \in M^{\infty,1}_{1\otimes v_s}(\rdd)$, such
that
\begin{equation}\label{aggiunta}
\sigma^w\int e^{2\pi i\Phi(x,\eta)} \widehat{u}(\eta)\,d\eta=\int
e^{2\pi i\Phi(x,\eta)}\tilde{\sigma}(x,\eta) \widehat{u}(\eta)\,d\eta
\, .
\end{equation}
\par Explicitly,
$\tilde{\sigma}=\mathcal{U}_2\,\mathcal{U}\,\mathcal{U}_1 \sigma$,
where $\mathcal{U}_1$,  $\mathcal{U}$, $\mathcal{U}_2$ are the
isomorphisms of $M^{\infty,1}_{1\otimes v_s}$ given by $(\mathcal{U}_1
\sigma)(x,\eta)=\sigma(x,\eta+Ax)$, $(\mathcal{U}_2
\sigma)(x,\eta)=\sigma(x,B^\ast \eta)$,
$\widehat{\mathcal{U}\sigma}(\eta_1,\eta_2)=e^{\pi i
  \eta_1\eta_2}\widehat{\sigma}(\eta_1,\eta_2)$.
\end{proposition}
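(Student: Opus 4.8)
The plan is to compute the composition $\sigma^w F_0$ explicitly, where $F_0$ denotes the ``model'' FIO with phase $\Phi$ and symbol $\equiv 1$, namely $F_0u(x)=\int_{\rd}e^{2\pi i\Phi(x,\eta)}\hat u(\eta)\,d\eta$, and then to read off that the resulting symbol is $\tilde\sigma=\mathcal{U}_2\mathcal{U}\mathcal{U}_1\sigma$. I would make two reductions first. Since only the symmetric parts of $A$ and $C$ enter the quadratic forms $xAx$ and $\eta C\eta$, we may assume $A$ and $C$ symmetric (this is automatic in the application, Theorem~\ref{rappresentazione}). Next, the dependence on $C$ is trivial: writing $P_C$ for the Fourier multiplier with $\widehat{P_Cu}(\eta)=e^{\pi i\eta C\eta}\hat u(\eta)$, one passes from the case $C=0$ of \eqref{aggiunta} to the general one by replacing $u$ with $P_Cu$, \emph{without changing the symbol}; this also explains why $\tilde\sigma$ does not involve $C$. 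So it suffices to treat $C=0$.

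For $C=0$ a one-line Fourier inversion gives $F_0=M_A\,D_B$, where $M_Af(x)=e^{\pi ixAx}f(x)$ and $D_Bf(x)=f(Bx)$. Up to a unimodular constant $M_A$ is the metaplectic operator of the shear $\Big(\begin{smallmatrix}I&0\\A&I\end{smallmatrix}\Big)$, so the symplectic covariance of the Weyl calculus (\cite[Theorem 18.5.9]{hormander3}, \cite{folland89}; one can also verify this directly from the integral defining $\sigma^w$) yields $M_A^{-1}\sigma^wM_A=(\mathcal{U}_1\sigma)^w$, hence $\sigma^wF_0=M_A(\mathcal{U}_1\sigma)^wD_B$. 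It then remains to compose $(\mathcal{U}_1\sigma)^w$ with $D_B$. The point is that a Weyl operator has a kernel depending on $\tfrac{x+y}{2}$, which does not interact cleanly with the dilation $y\mapsto By$; so I would first rewrite $(\mathcal{U}_1\sigma)^w$ in Kohn--Nirenberg form, whose symbol is $\mathcal{U}(\mathcal{U}_1\sigma)$ by the definition of $\mathcal{U}$. Since a Kohn--Nirenberg symbol $\kappa$ acts by $\kappa(x,D)v(x)=\int e^{2\pi ix\eta}\kappa(x,\eta)\hat v(\eta)\,d\eta$, the substitution $\eta\mapsto B^\ast\eta$ (together with $\widehat{v(B\cdot)}(\eta)=|\det B|^{-1}\hat v((B^\ast)^{-1}\eta)$) gives at once
\[
(\mathcal{U}_1\sigma)^wD_Bu(x)=\int_{\rd}e^{2\pi i\,\eta Bx}\,(\mathcal{U}_2\mathcal{U}\mathcal{U}_1\sigma)(x,\eta)\,\hat u(\eta)\,d\eta\,.
\]
Multiplying by $M_A$ restores the term $\tfrac12 xAx$ in the phase, and undoing the $C$-reduction restores $\tfrac12\eta C\eta$; this is exactly \eqref{aggiunta} with $\tilde\sigma=\mathcal{U}_2\mathcal{U}\mathcal{U}_1\sigma$.

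The remaining point is that $\mathcal{U}_1$, $\mathcal{U}$, $\mathcal{U}_2$ are isomorphisms of $\mifs(\rdd)$. The maps $\mathcal{U}_1$ and $\mathcal{U}_2$ are compositions with the invertible linear maps $(x,\eta)\mapsto(x,\eta+Ax)$ and $(x,\eta)\mapsto(x,B^\ast\eta)$ of $\rdd$; such a change of variables induces on the (symbol) STFT a transformation of $\Refn$ that is block diagonal in the position/frequency splitting, so the mixed $L^{\infty,1}$-structure is preserved, while the weight $v_s$ stays equivalent by \eqref{pesieq}. This is precisely the computation in the proof of Lemma~\ref{lkh1}(ii), which applies verbatim to any matrix in $GL(2d,\bR)$, not only to symplectic ones. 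Finally, $\mathcal{U}$ is the passage from the Weyl symbol to the Kohn--Nirenberg symbol of a pseudodifferential operator, and the invariance of the generalized Sj\"ostrand class $\mifs$ under this change of quantization (in fact under all $\tau$-quantizations) is part of the time-frequency calculus of \cite{charly06} (the unweighted case being in \cite{wiener31}).

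I expect the main obstacle to be the bookkeeping in the second step: one must track the unimodular constants and, above all, the precise order of the two substitutions so that the elementary operations compose into exactly $\mathcal{U}_2\mathcal{U}\mathcal{U}_1$ — the appearance of $\mathcal{U}$ being the least transparent, since it is forced by the need to convert the $\tfrac{x+y}{2}$-dependence of the Weyl kernel into a form in which the $B$-dilation acts on the frequency variable alone. A minor technical point is the justification of the formal integral manipulations: $F_0$ maps $\cS(\rd)$ to $\cS(\rd)$ (it is a metaplectic operator up to a constant) and $\sigma^w:\cS(\rd)\to\cS'(\rd)$, so \eqref{aggiunta} is an identity of operators $\cS(\rd)\to\cS'(\rd)$; it can be verified first for Schwartz symbols, where all integrals converge absolutely, and then extended to $\sigma\in\mifs(\rdd)$ by a standard approximation argument, using the continuity of $\sigma\mapsto\tilde\sigma$ on $\mifs$ and of both sides of \eqref{aggiunta} on $L^2(\rd)$.
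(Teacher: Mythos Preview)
Your proof is correct and follows essentially the same route as the paper: the paper brings $\sigma^w$ inside the $\eta$-integral and computes $\sigma^w(e^{\pi ixAx}e^{2\pi i\eta Bx})$ pointwise in $\eta$, which is exactly your operator identity $\sigma^wM_A=M_A(\mathcal{U}_1\sigma)^w$ applied to plane waves, then passes to Kohn--Nirenberg form ($\mathcal{U}$) and evaluates on $e^{2\pi i\eta Bx}$ to obtain the $B^\ast$-substitution ($\mathcal{U}_2$). The justification of the $\mifs$-invariance of $\mathcal{U}_1,\mathcal{U}_2,\mathcal{U}$ is also the same, the paper citing \cite[(14.19), (14.17), Corollary~14.5.5]{book} where you invoke symplectic covariance and \cite{charly06}.
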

\begin{proof}
By replacing $A$ by $(A+A^*)/2$,  we may assume that $A$ is symmetric.
 Now, we have
\[
\sigma^w\int e^{2\pi i\Phi(x,\eta)} \widehat{u}(\eta)\,d\eta=\int e^{\pi i\eta C\eta}\sigma^w(e^{\pi i x A x+2\pi i \eta B x})\widehat{u}(\eta)\,d\eta.
\]
An explicit computation (see e.g. \cite[(14.19)]{book}), then gives
\[
\sigma^w (e^{\pi i x A x+2\pi i \eta B x})= e^{\pi i x A x}\sigma_1^w(e^{2\pi i \eta B x}),
\]
where $\sigma_1(x,\eta)=\sigma(x,\eta+Ax)$ is still in $M^{\infty,1}_{1\otimes v_s}(\rdd)$; in fact, using Lemma \ref{normeeq} the space $M^{\infty,1}_{1\otimes v_s}(\rdd)$ is easily seen to be invariant with respect to linear changes of variables. Now we can write $\sigma_1^w$ as an operator in the Kohn-Nirenberg form, $\sigma_1^w=\sigma_2(x,D)$, for the new symbol  $\sigma_2=\mathcal{U} \sigma_1$ with $\mathcal{U}$ as in the statement (see \cite[formula (14.17)]{book}). A staightforward modification of \cite[Corollary 14.5.5]{book} shows that the modulation spaces $M^{p,q}_{v_s}(\rdd)$ are invariant under the action of $\mathcal{U}$, hence $\sigma_2\in M^{\infty,1}_{1\otimes v_s}(\rdd)$. We now have
\[
\sigma^w (e^{\pi i x A x+2\pi i \eta B x})= e^{\pi i x A x}\sigma_2(x,D)(e^{2\pi i \eta B x}).
\]
On the other hand, a simple computation shows that
\[
\sigma_2(x,D)(e^{2\pi i \eta B x})= e^{2\pi i \eta B x}\sigma_2(x,B^\ast\eta),
\]
so that \eqref{aggiunta} holds with $\tilde{\sigma}(x,\eta)= \sigma_2(x,B^\ast\eta)$, which belongs to $M^{\infty,1}_{1\otimes v_s}(\rdd)$.
\end{proof}
\begin{proof}[Proof of Theorem \ref{rappresentazione}]
Assume that  $T$ is type I with  $\Phi(x,\eta)$ as in \eqref{fase} and
$\sigma\in M^{\infty,1}_{1\otimes v_s}(\rdd)$. By applying Proposition
\ref{aggiunta0} and then the representation  of $\mu (\cA
)$ as a type I FIO, it follows that
$T$ can be factorized as $\sigma^{w}\mu(\mathcal{A})$, with
$\sigma\in M^{\infty,1}_{1\otimes v_s}(\rdd)$, so that $T\in
FIO(\mathcal{A},v_s)$ by Theorem \ref{prod1}.\par
Conversely, assume  that $T\in FIO(\mathcal{A},v_s)$. According to
Theorem \ref{pseudomu},  $T$ factorizes  as  $T=\sigma^w\mu(\A)$, with
$\sigma\in M^{\infty,1}_{1\otimes v_s}(\rdd)$, and the conclusion follows by applying the FIO representation of
$\mu (\A )$  and then Proposition \ref{aggiunta0}.
\end{proof}
\par
Recall that  a {\it FIO of type II}  is the formal $L^2$-adjoint of a FIO of
type I and can be written in the  form
\begin{equation}\label{type2}
   Tf(x)=T_{II,\Phi,\sigma}(x):=\int_{\rdd}e^{-2\pi i [\Phi(y,\eta)-x\eta]}\tau(y,\eta) f(y)dy\,d\o.
\end{equation}

\begin{theorem} \label{Winerfio}
 Consider $T\in FIO(\mathcal{A},v_s)$, $s\geq 0$ and $\A=\begin{pmatrix}  A&B\\C&D\end{pmatrix}\in
Sp(d,\R)$ with $ \det A\not=0$. If $T$ is invertible on $L^2(\rd)$,  then
$T^{-1}$ is a FIO of type II with the same
phase $\Phi$ in \eqref{fase} and  a symbol $\tau\in M^{\infty,1}_{1\otimes v_s}(\rdd)$.
\end{theorem}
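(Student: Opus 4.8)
The plan is to \emph{not} analyze $T^{-1}$ directly. By Theorem~\ref{Wineroper} we know $T^{-1}\in FIO(\A^{-1},v_s)$, but the type~I representation supplied by Theorem~\ref{rappresentazione} for that class is governed by the upper-left block of $\A^{-1}$, which is $D^T$ (write $\A^{-1}=\big(\begin{smallmatrix}D^T&-B^T\\-C^T&A^T\end{smallmatrix}\big)$) and need not be invertible. So instead I would pass through the adjoint, engineering a situation in which the type~I representation is applied over $\A$ itself — whose block $A$ \emph{is} invertible by hypothesis.

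Here are the steps. First, since $T$ is invertible on $\lrd$, so is $T^\ast$, with $(T^\ast)^{-1}=(T^{-1})^\ast$. Second, the proof of Theorem~\ref{Wineroper} shows that $T\in FIO(\A,v_s)$ implies $T^\ast\in FIO(\A^{-1},v_s)$; applying Theorem~\ref{Wineroper} now to the $L^2$-invertible operator $T^\ast$ yields $S:=(T^\ast)^{-1}\in FIO((\A^{-1})^{-1},v_s)=FIO(\A,v_s)$. Third, since $\det A\neq0$, Theorem~\ref{rappresentazione} applies to $S$ and represents it as a type~I FIO~\eqref{fiotipo1} with the quadratic phase $\Phi$ of~\eqref{fase} and a symbol $\sigma\in M^{\infty,1}_{1\otimes v_s}(\rdd)$. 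Fourth, $T^{-1}=\big((T^\ast)^{-1}\big)^\ast=S^\ast$, and a FIO of type~II is by definition~\eqref{type2} the formal $\lrd$-adjoint of a FIO of type~I. Computing the adjoint of~\eqref{fiotipo1} (plug $\hat f(\eta)=\int f(y)e^{-2\pi i y\eta}\,dy$ into $\langle Sf,h\rangle$ and read off $S^\ast$) shows that $S^\ast=T^{-1}$ is exactly the type~II operator $T_{II,\Phi,\tau}$ of~\eqref{type2} with the \emph{same} phase $\Phi$ and symbol $\tau=\bar\sigma$. Finally, $\bar\sigma\in M^{\infty,1}_{1\otimes v_s}(\rdd)$ because $|V_g\bar\sigma(x,\xi)|=|V_{\bar g}\sigma(x,-\xi)|$ and the weight $1\otimes v_s$ is even in the frequency variable, so this modulation space is invariant under complex conjugation.

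The only genuine subtlety — and the reason the detour through $T^\ast$ is required rather than a direct appeal to Theorem~\ref{rappresentazione} for $\A^{-1}$ — is the invertibility hypothesis on the upper-left block; everything else (the identity $(T^{-1})^\ast=(T^\ast)^{-1}$, membership of $S$ in $FIO(\A,v_s)$ via Theorems~\ref{Wineroper}, the adjoint computation converting a type~I kernel into a type~II kernel with the same phase, and the conjugation-invariance of $M^{\infty,1}_{1\otimes v_s}$) is routine bookkeeping. I expect no serious obstacle, just the need to track carefully which symplectic matrix each operator sits over.
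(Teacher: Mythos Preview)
Your proof is correct and essentially the same as the paper's: both express $T^{-1}$ as the adjoint of an operator in $FIO(\A,v_s)$, apply Theorem~\ref{rappresentazione} to get a type~I representation with phase $\Phi$, and then take the adjoint to obtain the type~II form with conjugate symbol. The only cosmetic difference is that the paper writes this operator as $TP^{-1}$ with $P=T^\ast T$ and invokes the algebra property plus Proposition~\ref{fund}(iii), whereas you write it as $S=(T^\ast)^{-1}$ and cite Theorem~\ref{Wineroper} directly --- but since $TP^{-1}=T(T^\ast T)^{-1}=(T^\ast)^{-1}$, these are literally the same operator.
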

\begin{proof} The proof is a small modification of  \cite[Theorem
  4.6]{fio5}.  As in the proof of Theorem \ref{Wineroper}, the adjoint
  operator $T^\ast$   belongs to  $FIO(\A \inv, v_s)$ and  $P=T^*T\in
  FIO(\mathrm{Id},v_s)$.  Write
$$
T^{-1}=P^{-1} T^\ast = (T P\inv )^\ast  \, .
$$
Since $P^{-1}\in  FIO(\mathrm{Id},v_s)$ by Proposition~\ref{fund}(iii), the
algebra property of Theorem~\ref{prod1} implies  that $T P\inv
\in  FIO(\A\circ\mathrm{Id},v_s)=FIO(\A,v_s)$. By Theorem~\ref{rappresentazione}
$TP\inv $ is of  type I
with phase $\Phi $ and a symbol $\rho \in M^{\infty,1}_{1\otimes
  v_s}$. This implies
$T^{-1}\in FIO(\A\inv,v_s)$, $T\inv$ of type II with the same phase $\Phi$ and  symbol $\tau(y,\eta)=\overline{\rho(y,\eta)}$ which is still in $M^{\infty,1}_{1\otimes v_s}(\rdd)$.
\end{proof}

\end{document}